\makeindex \setcounter{tocdepth}{2}
\theoremstyle{plain}
\newtheorem{theorem}{Theorem}[section]
\newtheorem{proposition}[theorem]{Proposition}
\newtheorem{corollary}[theorem]{Corollary}
\newtheorem{lemma}[theorem]{Lemma}
\theoremstyle{definition}
\newtheorem{remark}[theorem]{Remark}
\def\bF{\mathbb{F}}
\def\bK{\mathbb{K}}
\def\bQ{\mathbb{Q}}
\def\bZ{\mathbb{Z}}
\def\A{\mathbf{A}}
\def\cG{\mathcal{G}}
\def\cO{\mathcal{O}}
\def\cP{\mathcal{P}}
\def\cQ{\mathcal{Q}}
\def\cR{\mathcal{R}}
\def\cZ{\mathcal{Z}}
\def\cU{\mathcal{U}}
\def\cT{\mathcal{T}}
\def\fp{\mathfrak{p}}
\def\fq{\mathfrak{q}}
\def\deg{\mathbf{deg}}
\def\Gal{\mathrm{Gal}}
\def\Norm{\mathrm{Norm}}
\def\fc{\mathfrak{c}}
\def\fg{\mathfrak{g}}
\def\k{\mathbf{k}}
\def\alg{\mathbf{alg}}
\def\Nm{\mathbf{Norm}}
\begin{document}

\title{Representation of units in cyclotomic function fields}

\author{Nguyen Ngoc Dong Quan}

\date{December 16, 2015}

\address{Department of Mathematics \\
         The University of Texas at Austin \\
         Austin, TX 78712 \\
         USA}

\email{\href{mailto:dongquan.ngoc.nguyen@gmail.com}{\tt dongquan.ngoc.nguyen@gmail.com}}

\maketitle

\tableofcontents

\section{Introduction}

Let $q$ be be a power of a prime $p$, and let $\bF_q$ denote the finite field with $q$ elements. Let $\A = \bF_q[T]$ be the ring of polynomials in the variable $T$ over $\bF_q$, and let $\k = \bF_q(T)$ be the quotient field of $\A$. Let $\k^{\alg}$ denote an algebraic closure of $\k$. Let $\tau$ be the mapping defined by $\tau(x) = x^q$, and let $\k\langle \tau \rangle$ denote the twisted polynomial ring. Let $C : \A \rightarrow \k\langle \tau \rangle$ ($a \mapsto C_a$) be the Carlitz module, namely, $C$ is an $\bF_q$-algebra homomorphism such that $C_T = T + \tau$.

Let $m$ be a polynomial of positive degree, and set $\Lambda_m = \{\lambda \in \k^{\alg} \; | \; C_m(\lambda) = 0 \}$. We define a \textit{primitive $m$-th root of $C$} to be a root of the polynomial $C_m(x) \in \A[x]$ that generates the $\A$-module $\Lambda_m$. Throughout the paper, for each polynomial $m$, we fix a primitive $m$-th root of $C$, and denote it by $\lambda_m$. The \textit{$m$-th cyclotomic function field}, denoted by $\bK_m$, is defined by $\bK_m = \k(\Lambda_m) = \k(\lambda_m)$. It is known (see Hayes \cite{Hayes}, or Rosen \cite{Rosen}) that $\bK_m$ is a field extension of $\k$ of degree $\Phi(m)$, where $\Phi(\cdot)$ is a function field analogue of the classical Euler $\phi$-function. (We will recall the definition of $\Phi(\cdot)$ in Section \ref{S-basic-notions}.)

There are many strong analogies between the $m$-th cyclotomic function fields $\bK_{m}$ and the classical cyclotomic fields $\bQ(\zeta_m)$ (see Goss \cite{Goss}, Hayes \cite{Hayes}, Rosen \cite{Rosen}, or Thakur \cite{Thakur}). Here the former letter $m$ denotes a polynomial of positive degree in $\A$, and the latter letter $m$ stands for a positive integer in $\bZ$. In this paper, we are interested in studying new analogous phenomena between the cyclotomic function fields and the classical cyclotomic fields. 

Newman \cite{Newman} proved a refinement of Hilbert's Satz 90 for the extensions $\bQ(\zeta_p)/\bQ$, where $p > 3$ is a prime, and $\zeta_p$ denotes the $p$-th root of unity. Recall that for a unit $\epsilon$ of norm $1$ in $\bZ[\zeta_p]$, Hilbert's Satz 90 (see Lang \cite{Lang}) tells us that one can write $\epsilon$ as a quotient of conjugate integers in $\bZ[\zeta_p]$, i.e., there exists an element $\delta \in \bZ[\zeta_p]$ and an element $\sigma \in \Gal(\bQ(\zeta_p)/\bQ)$ such that $\epsilon = \dfrac{\delta}{\sigma(\delta)}$. Newman (see \cite[Corollary, p.357]{Newman}) gave a refinement of Hilbert's Satz 90 by proving a sufficient and necessary condition for which such an element $\delta$ can be chosen to be a unit in $\bZ[\zeta_p]$, and thus $\epsilon$ is a quotient of conjugate units in $\bZ[\zeta_p]$. In order to obtain this result, Newman proved a stronger result that provides a unique representation of a unit of norm $1$ as a product of a power of a unit $\dfrac{1 - \zeta_p^e}{1 - \zeta_p}$ with a quotient of conjugate units. More precisely, Newman proved the following.

\begin{theorem}
\label{Newman-T}
$(\text{Newman, see \cite[Theorem, p.353]{Newman}})$

Let $e$ be a primitive root modulo $p$, and let $\sigma_e$ be a generator of the Galois group $\Gal(\bQ(\zeta_p)/\bQ)$ that sends $\zeta_p$ to $\zeta_p^e$. Let $\cG_p$ be the multiplicative group consisting of all units of the form $\dfrac{\delta}{\sigma_e(\delta)}$, where $\delta$ is a unit in $\bZ[\zeta_p]$. Then for any unit $\epsilon$ of norm $1$ in $\bZ[\zeta_p]$, there exist an integer $\ell$ with $0 \le \ell \le p - 2$, and a unit $\alpha \in \cG_p$ such that
\begin{align*}
\epsilon = \left(\dfrac{1 - \zeta_p^e}{1 - \zeta_p}\right)^{\ell}\alpha.
\end{align*}
Furthermore the above representation is unique, i.e., if $\epsilon = \left(\dfrac{1 - \zeta_p^e}{1 - \zeta_p}\right)^{\ell_{\star}}\alpha_{\star}$ for some integer $\ell_{\star}$ with $0 \le \ell_{\star} \le p - 2$ and some unit $\alpha_{\star} \in \cG_p$, then $\ell_{\star} = \ell$ and $\alpha_{\star} = \alpha$.

\end{theorem}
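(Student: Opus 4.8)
The plan is to recast the statement as the computation of a single quotient group. Write $K = \bQ(\zeta_p)$, let $U = \bZ[\zeta_p]^{\times}$ be its unit group, and let $U_1 = \{\epsilon \in U : N_{K/\bQ}(\epsilon) = 1\}$ be the subgroup of units of norm $1$. Since the norm is $\Gal$-invariant, every element $\delta/\sigma_e(\delta)$ has norm $1$, so $\cG_p = \{\delta/\sigma_e(\delta) : \delta \in U\}$ is a subgroup of $U_1$. Setting $u := \dfrac{1-\zeta_p^e}{1-\zeta_p}$, the theorem is equivalent to the assertion that $U_1/\cG_p$ is cyclic of order $p-1$, generated by the class of $u$: granting this, every $\epsilon \in U_1$ equals $u^{\ell}\alpha$ for a unique $\ell \in \{0, 1, \dots, p-2\}$ and $\alpha = u^{-\ell}\epsilon \in \cG_p$, which is exactly the existence and uniqueness claimed.

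First I would record that $u \in U_1$. Writing $\pi := 1 - \zeta_p$, one has $\sigma_e(\pi) = 1 - \zeta_p^e$, hence $u = \sigma_e(\pi)/\pi$; since $N_{K/\bQ}(\pi) = \prod_{j=0}^{p-2}\sigma_e^j(\pi) = \Phi_p(1) = p$ is rational, $N_{K/\bQ}(u) = 1$. The engine of the proof is Hilbert's Satz $90$ itself: as $\Gal(K/\bQ) = \langle \sigma_e\rangle$ is cyclic, any $\epsilon \in U_1$ can be written $\epsilon = \beta/\sigma_e(\beta)$ with $\beta \in K^{\times}$. Because $\epsilon$ is a \emph{unit}, the ideals $\beta\bZ[\zeta_p]$ and $\sigma_e(\beta)\bZ[\zeta_p]$ coincide, so the principal ideal $(\beta)$ is $\sigma_e$-fixed, i.e.\ ambiguous.

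The key structural input is a classification of ambiguous ideals that avoids any appeal to the class number. The prime $p$ is totally ramified in $K$, with $(p) = \fp^{p-1}$ and $\fp = (\pi)$ principal, while every rational prime $\ell \ne p$ is unramified and $\Gal(K/\bQ)$ permutes the primes above $\ell$ transitively, so $\prod_{\fq \mid \ell}\fq = (\ell)$. Thus a $\sigma_e$-fixed ideal has prime exponents constant on Galois orbits, and therefore factors as $\fp^{a}\cdot(c)$ with $c \in \bQ^{\times}$; in particular $\beta = c\, w\, \pi^{a}$ with $w \in U$. I would then define $\phi : U_1 \to \bZ/(p-1)\bZ$ by $\phi(\epsilon) = v_{\fp}(\beta) \bmod (p-1)$, where $v_{\fp}$ is the valuation at $\fp$. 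This is well defined because two Hilbert $90$ representatives of $\epsilon$ differ by a factor fixed by $\sigma_e$, hence by an element of $\bQ^{\times}$, and $v_{\fp}(c) \in (p-1)\bZ$ for $c \in \bQ^{\times}$ since $e(\fp/p) = p-1$; it is visibly a homomorphism. For the kernel: if $\phi(\epsilon) = 0$ then $a \equiv 0 \pmod{p-1}$, so $\fp^{a} = (p^{a/(p-1)})$ and $(\beta) = \fp^{a}(c)$ is a rational ideal, forcing $\beta = r\,w$ with $r \in \bQ^{\times}$, $w \in U$, whence $\epsilon = w/\sigma_e(w) \in \cG_p$; conversely $\cG_p \subseteq \ker\phi$ since a unit $\delta$ has $v_{\fp}(\delta) = 0$. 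Thus $\ker\phi = \cG_p$ and $\phi$ induces an injection $U_1/\cG_p \hookrightarrow \bZ/(p-1)\bZ$. Finally, taking $\beta = \pi^{-1}$ gives $\beta/\sigma_e(\beta) = \sigma_e(\pi)/\pi = u$, so $\phi(u) = -1$, a generator; hence $\phi$ is onto and $U_1/\cG_p \cong \bZ/(p-1)\bZ$, generated by the class of $u$.

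I expect the main obstacle to be the classification of ambiguous ideals together with the well-definedness of $\phi$. The temptation is to normalize $\beta$ prematurely; the clean route is to let $\phi$ read $v_{\fp}(\beta)$ only modulo $p-1$, so that the rational-scaling ambiguity in Hilbert $90$ and the ramification index $e(\fp/p) = p-1$ cancel exactly. It is worth stressing that the argument nowhere uses regularity of $p$ or the size of the class number: only the principality of the single ramified prime $\fp$ and the transitivity of the Galois action on unramified primes enter, which is precisely what yields the clean isomorphism $U_1/\cG_p \cong \bZ/(p-1)\bZ$.
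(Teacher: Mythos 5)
Your proof is correct, but note that the paper does not actually prove Theorem \ref{Newman-T} --- it is quoted from Newman and the paper instead proves its function field analogue (Theorem \ref{MT}), explicitly following Newman's original argument. Measured against that proof, your route is genuinely different in organization, though it rests on the same arithmetic inputs (Hilbert's Satz 90 for the cyclic extension, the fact that the unique ramified prime is principal and generated by $1-\zeta_p$, and transitivity of the Galois action on the primes over an unramified rational prime). You recast the whole statement as the computation of $U_1/\cG_p$, classify ambiguous fractional ideals as $\fp^{a}(c)$, and read off everything from the valuation homomorphism $\phi(\epsilon)=v_{\fp}(\beta)\bmod(p-1)$; existence and uniqueness drop out simultaneously from $U_1/\cG_p\cong\bZ/(p-1)\bZ$, and your well-definedness check (two Satz-90 representatives differ by an element of $\bQ^{\times}$, whose $\fp$-valuation lies in $(p-1)\bZ$) is exactly the point where the ramification index cancels the rational-scaling ambiguity. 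The paper's proof instead works with explicit polynomial representatives $\cP_{\alpha}(x)$ of degree at most $q^{\deg(\wp)}-2$: it extracts the exponent $\ell$ from the discrete logarithm of $\cP_{\epsilon}(0)$ modulo $\wp$, normalizes the Satz-90 numerator by stripping off the power of $\lambda_{\wp}$ and dividing by the content, and invokes Lemma \ref{L1} (a Galois-stable non-unit coprime to $\lambda_{\wp}$ would force a rational prime to divide the content) to certify that the normalized numerator is a unit; uniqueness is then a separate argument by reduction modulo $\lambda_{\wp}$ and primitivity of $\fg$. Your Lemma-\ref{L1}-equivalent is the ambiguous-ideal classification, so the two proofs share their kernel; what yours buys is a cleaner, coordinate-free derivation that makes the group-theoretic statement (the analogue of Corollary \ref{C2-MT}) primary, while the paper's version buys explicitness --- it produces the representing polynomial $\cQ$ and ties $\ell$ to the residue $\cP_{\epsilon}(0)\bmod\wp$, which is what feeds directly into the refinement of Satz 90 in Corollary \ref{C3-MT}.
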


The aim of this paper is to prove a function field analogue of the above theorem, which can be viewed as a refinement of Hilbert's Satz 90 for the extensions $\bK_{\wp}/\k$. More explicitly, our main goal in this paper is to prove the following.

\begin{theorem}
\label{MT-in-S1}
$(\text{See Theorem \ref{MT} and Corollary \ref{C1-MT}})$

Let $\wp$ be a monic prime in $\A$. Let $\bK_{\wp}$ be the $\wp$-th cyclotomic function field, and let $\cO_{\wp}$ be the ring of integers of $\bK_{\wp}$. Let $\fg$ be a primitive root modulo $\wp$, and let $\sigma_{\fg}$ be a generator of the Galois group $\Gal(\bK_{\wp}/\k)$ that sends $\lambda_{\wp}$ to $C_{\fg}(\lambda_{\wp}$. Let $\cG_{\wp}$ be the multiplicative group consisting of all units of the form $\dfrac{\delta}{\sigma_{\fg}(\delta)}$, where $\delta$ is a unit in $\cO_{\wp}$. Then for any unit $\epsilon$ of norm $1$ in $\cO_{\wp}$, there exist an integer $\ell$ with $0 \le \ell \le q^{\deg(\wp)} - 2$, and a unit $\alpha \in \cG_{\wp}$ such that
\begin{align*}
\epsilon = \left(\dfrac{\lambda_{\wp}}{C_{\fg}(\lambda_{\wp})} \right)^{\ell}\alpha.
\end{align*}
Furthermore the above representation is unique, i.e.m if $\epsilon = \left(\dfrac{\lambda_{\wp}}{C_{\fg}(\lambda_{\wp})} \right)^{\ell_{\star}}\alpha_{\star}$ for some integer $\ell_{\star}$ with $0 \le \ell_{\star} \le q^{\deg(\wp)} - 2$ and some unit $\alpha_{\star} \in \cG_{\wp}$, then $\ell_{\star} = \ell$ and $\alpha_{\star} = \alpha$.

\end{theorem}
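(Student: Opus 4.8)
The plan is to reduce the whole statement to the behaviour of the single element $\xi := \lambda_\wp/C_\fg(\lambda_\wp) = \lambda_\wp/\sigma_\fg(\lambda_\wp)$ at the unique prime lying over $\wp$. Write $n := q^{\deg(\wp)} - 1 = \Phi(\wp) = [\bK_\wp:\k]$, and recall that $\wp$ is totally ramified in $\bK_\wp$, with a single prime $\mathfrak{P}$ above it, that $\lambda_\wp$ is a uniformizer at $\mathfrak{P}$, and that the residue field $\cO_\wp/\mathfrak{P}$ is canonically $\A/\wp$. First I would record the basic properties of $\xi$. Since $\mathfrak{P}$ is the only prime over $\wp$, it is fixed by $\sigma_\fg$, so $\sigma_\fg(\lambda_\wp) = C_\fg(\lambda_\wp)$ is again a uniformizer at $\mathfrak{P}$; hence $\xi$ has trivial valuation at every finite prime and is a unit of $\cO_\wp$, and $\Norm_{\bK_\wp/\k}(\xi) = 1$ by a telescoping computation. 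Moreover, from $C_\fg(x) = \fg x + (\text{terms in } x^{q}, x^{q^2}, \dots)$ one gets $\overline{C_\fg(\lambda_\wp)/\lambda_\wp} = \bar\fg$ in $(\A/\wp)^\times$, so the reduction of $\xi$ modulo $\mathfrak{P}$ is $\bar\fg^{-1}$; as $\fg$ is a primitive root modulo $\wp$, this reduction has multiplicative order exactly $n$.

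Next I would introduce the reduction homomorphism $r \colon U^1 \to (\A/\wp)^\times$ on the group $U^1$ of norm-$1$ units, sending a unit to its class modulo $\mathfrak{P}$, and prove two facts. (i) $r$ kills $\cG_\wp$: because $\wp$ is totally ramified, the inertia group at $\mathfrak{P}$ is all of $\Gal(\bK_\wp/\k)$, so $\sigma_\fg$ acts trivially on $\cO_\wp/\mathfrak{P} = \A/\wp$; hence for a unit $\delta$ one has $\overline{\sigma_\fg(\delta)} = \bar\delta$ and $r(\delta/\sigma_\fg(\delta)) = 1$. (ii) $\xi^n \in \cG_\wp$: taking $\delta_0 := \lambda_\wp^n/\wp$, a divisor computation gives $\mathrm{div}_{\mathrm{fin}}(\delta_0) = n\mathfrak{P} - n\mathfrak{P} = 0$ (the extension of the divisor $\wp$ from $\k$ to $\bK_\wp$ is $n\mathfrak{P}$), so $\delta_0 \in \cO_\wp^\times$ and $\delta_0/\sigma_\fg(\delta_0) = \lambda_\wp^n/\sigma_\fg(\lambda_\wp)^n = \xi^n$.

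For existence, take a norm-$1$ unit $\epsilon$. Since $\Gal(\bK_\wp/\k) = \langle\sigma_\fg\rangle$ is cyclic, Hilbert's Satz~90 gives $\beta \in \bK_\wp^\times$ with $\epsilon = \beta/\sigma_\fg(\beta)$. The key point is to adjust $\beta$ by a unit and by an element of $\k^\times$ so that it becomes a power of $\lambda_\wp$ times a unit. As $\epsilon$ is a unit of $\cO_\wp$, its divisor is trivial at every finite prime, so $\mathrm{div}_{\mathrm{fin}}(\beta)$ is invariant under $\sigma_\fg$; subtracting its $\mathfrak{P}$-coefficient $c$ via $\mathrm{div}_{\mathrm{fin}}(\lambda_\wp^{c})$ leaves a $\Gal(\bK_\wp/\k)$-invariant divisor supported on the unramified primes, which is therefore the extension of a divisor of $\k$. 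Because $\A = \bF_q[T]$ has trivial ideal class group, that divisor of $\k$ is principal, say $\mathrm{div}_{\mathrm{fin},\k}(\eta)$ with $\eta \in \k^\times$. Then $\delta := \beta/(\lambda_\wp^{c}\eta)$ lies in $\cO_\wp^\times$, and since $\eta$ is fixed by $\sigma_\fg$,
\[
\epsilon \;=\; \frac{\beta}{\sigma_\fg(\beta)} \;=\; \left(\frac{\lambda_\wp}{\sigma_\fg(\lambda_\wp)}\right)^{c}\frac{\delta}{\sigma_\fg(\delta)} \;=\; \xi^{c}\,\frac{\delta}{\sigma_\fg(\delta)}.
\]
Reducing $c$ modulo $n$ and absorbing the resulting power of $\xi^n \in \cG_\wp$ into the $\cG_\wp$-factor yields $\epsilon = \xi^{\ell}\alpha$ with $0 \le \ell \le n-1 = q^{\deg(\wp)}-2$ and $\alpha \in \cG_\wp$, as required.

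Uniqueness then follows formally from the reduction map: if $\xi^{\ell}\alpha = \xi^{\ell_\star}\alpha_\star$ with both exponents in $\{0,\dots,n-1\}$, then $\xi^{\ell-\ell_\star} = \alpha_\star\alpha^{-1} \in \cG_\wp$, so applying $r$ and using $r(\xi) = \bar\fg^{-1}$ gives $\bar\fg^{-(\ell-\ell_\star)} = 1$; since $\bar\fg$ has order $n$ and $|\ell-\ell_\star| < n$, we conclude $\ell = \ell_\star$ and hence $\alpha = \alpha_\star$. I expect the main obstacle to be the existence step — specifically, showing that the element $\beta$ produced by Satz~90 can be taken, up to a unit and an element of $\k^\times$, to be a power of $\lambda_\wp$. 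This is where the special arithmetic of $\bK_\wp/\k$ enters decisively: the total ramification of $\wp$ (so that the only obstruction concentrated at $\mathfrak{P}$ is a single integer exponent, handled by $\lambda_\wp$) together with the class number one property of $\bF_q[T]$ (so that the away-from-$\mathfrak{P}$ part offers no obstruction). The residue computation $\bar\xi = \bar\fg^{-1}$, which converts ``primitive root modulo $\wp$'' into ``order $n$ in the quotient $U^1/\cG_\wp$'', is the other point that must be handled with care.
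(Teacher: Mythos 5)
Your proof is correct, but it takes a genuinely different route from the paper's. The paper transplants Newman's argument almost verbatim: it works throughout with the unique polynomial representative $\cP_{\alpha}(x)\in\A[x]$ of degree at most $q^{\deg(\wp)}-2$ of each integer $\alpha$, and the engine of its existence proof is the unit criterion of Lemma \ref{L1} (if $\cP_{\alpha}(C_{\fg}(\lambda_{\wp}))/\cP_{\alpha}(\lambda_{\wp})$ is a unit, $\gcd(\cP_{\alpha}(0),\wp)=1$, and $\cP_{\alpha}$ has content $1$, then $\alpha$ is a unit), proved by showing that a hypothetical prime ideal divisor of $\cP_{\alpha}(\lambda_{\wp})$ drags all of its conjugates along and hence forces a rational prime $\fp\ne\wp$ into the content. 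The element $\delta$ supplied by Satz 90 is only integral, not a unit, so the paper normalizes it --- splitting off the power of $\lambda_{\wp}$ and dividing by the content --- until Lemma \ref{L1} applies, and it obtains the relation $\bigl(\lambda_{\wp}/C_{\fg}(\lambda_{\wp})\bigr)^{q^{\deg(\wp)}-1}\in\cG_{\wp}$ from the telescoping product of Lemma \ref{L2}. You replace all of this with ideal theory: Galois-invariance of the fractional ideal $(\beta)$, total ramification at $\wp$ to strip off the $\mathfrak{P}$-component by $\lambda_{\wp}^{c}$, and descent of the remaining invariant ideal to the principal ideal domain $\A$; your identification of $\bigl(\lambda_{\wp}/C_{\fg}(\lambda_{\wp})\bigr)^{q^{\deg(\wp)}-1}$ as $\delta_0/\sigma_{\fg}(\delta_0)$ with $\delta_0=\lambda_{\wp}^{q^{\deg(\wp)}-1}/\wp$ is also slicker than Lemma \ref{L2}. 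The uniqueness arguments agree in substance --- reduction modulo the totally ramified prime kills $\cG_{\wp}$ and sends $\lambda_{\wp}/C_{\fg}(\lambda_{\wp})$ to the class of $\fg^{-1}$ --- though you justify the first fact by the inertia group acting trivially on the residue field where the paper substitutes $x=0$ into representing polynomials. What the paper's longer route buys is the finer conclusion (R2) of Theorem \ref{MT}: uniqueness of the representing polynomial $\cQ$ up to a constant in $\bF_q^{\times}$, which your argument does not address (and which the statement as quoted does not require). What your route buys is brevity and a transparent structural reason why $\cU_{\star}(\cO_{\wp})/\cG_{\wp}$ is cyclic of order $q^{\deg(\wp)}-1$, at the cost of invoking the triviality of the class group of $\A$ and standard decomposition theory.
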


One can replace $\dfrac{1 - \zeta_p^e}{1 - \zeta_p}$ in Theorem \ref{Newman-T} by its inverse $\dfrac{1 - \zeta_p}{1 - \zeta_p^e}$, and obtain a similar representation for units of norm $1$ that is equivalent to that of Theorem \ref{Newman-T}. It is well-known (see Rosen \cite[Proposition 12.6]{Rosen}) that $\dfrac{\lambda_{\wp}}{C_{\fg}(\lambda_{\wp})}$ is a unit in $\cO_{\wp}$, and is a function field analogue of the unit $\dfrac{1 - \zeta_p}{1 - \zeta_p^e}$ in the number field setting. Hence Theorem \ref{MT-in-S1} can be viewed as a function field analogue of Newman's theorem. 

Using the same ideas as in Newman \cite{Newman}, we also obtain a sufficient and necessary condition for which a unit of norm $1$ in $\cO_{\wp}$ can be written as a quotient of conjugate units. This is a refinement of Hilbert's Satz 90 for the extensions $\bK_{\wp}/\k$. 

The structure of the paper is as follows. In Section \ref{S-basic-notions}, we recall some basic notions and notation that will be used throughout the paper. In Section \ref{S-main-results}, we prove Theorem \ref{MT-in-S1} (see Theorem \ref{MT} and Corollary \ref{C1-MT}), and consequently obtain a refinement of Hilbert's Satz 90 (see Corollary \ref{C3-MT}). The proof of Theorem \ref{MT-in-S1} uses the same ideas and approach as in the proof of Newman \cite[Theorem]{Newman}, but we need to introduce some modifications in many places to adapt Newman's proof of \cite[Theorem]{Newman} into the function field setting.

\section{Some basic notions} 
\label{S-basic-notions}

The aim of this section is to recall some basic notions and fix some notation that that will be used throughout the paper.

For a polynomial $m \in \A$ of positive degree, we define $\Phi(m)$ to be the number of nonzero polynomials of degree less than $\deg(m)$ and relatively prime to $m$. The function $\Phi(\cdot)$ is a function field analogue of the classical Euler $\phi$-function.

Let $m = \alpha \wp_1^{s_1}\cdots \wp_h^{s_h}$ be the prime factorization of $m$, where $\alpha \in \bF_q^{\times}$, the $\wp_i$ are monic primes in $\A$, and the $s_i$ are positive integers. It is well-known (see \cite[Proposition 1.7]{Rosen}) that
\begin{align*}
\Phi(m) = \prod_{i = 1}^h\Phi(\wp_i^{s_i}) = \prod_{i = 1}^h(q^{\deg(\wp_i^{s_i})} - q^{\deg(\wp_i^{s_i - 1})}).
\end{align*}
In particular, when $m = \wp^s$ for some monic prime $\wp$ and some positive integer $s$, 
\begin{align*}
\Phi(\wp^s) = q^{\deg(\wp^{s})} - q^{\deg(\wp^{s - 1})}.
\end{align*}

Recall that the \textit{$m$-th cyclotomic polynomial}, denoted by $\Psi_m(x)$, is the minimal polynomial of $\lambda_m$ over $\k$. It is well-known (see Hayes \cite{Hayes}, or Rosen \cite{Rosen}) that $\Psi_m(x)$ is the monic irreducible polynomial of degree $\Phi(m)$ with coefficients in $\A$ such that $\Psi_m(\lambda_m) = 0$.

When $m = \wp^s$ for some monic prime $\wp$ and some positive integer $s$, we know from \cite[Proposition 2.4]{Hayes} that 
\begin{align}
\label{Eqn-the-cyclotomic-polynomial-at-a-power-of-prime}
\Psi_{\wp^s}(x) = C_{\wp^s}(x)/C_{\wp^{s - 1}}(x).
\end{align}  

From Rosen \cite[Proposition 12.11]{Rosen}, one can write
\begin{align*}
C_{\wp}(x) = \wp x + [\wp, 1]x^q + \ldots, + [\wp, \deg(\wp) - 1]x^{q^{\deg(\wp) - 1}} + x^{q^{\deg(\wp)}},
\end{align*}
where $[\wp, i] \in \A$ for each $1 \le i \le \deg(\wp) - 1$. For $s = 1$, the equation (\ref{Eqn-the-cyclotomic-polynomial-at-a-power-of-prime}) tells us that
\begin{align}
\label{Eqn-the-cyclotomic-polynomial-at-prime}
\Psi_{\wp}(x) = \dfrac{C_{\wp}(x)}{x} = \wp  + [\wp, 1]x^{q - 1} + \ldots, + [\wp, \deg(\wp) - 1]x^{q^{\deg(\wp) - 1} - 1} + x^{q^{\deg(\wp)} - 1}.
\end{align}

When $x = 0$, we obtain the following elementary result that will be useful in the proof of our main theorem.

\begin{proposition}
\label{P-Psi(0)=wp}

$\Psi_{\wp}(0) = \wp$.

\end{proposition}

\subsection{The Galois group $\Gal(\bK_{\wp}/\k)$}
\label{Subsection-Galois-group}

For the rest of this paper, fix a monic prime $\wp$ of positive degree. Let $\bK_{\wp}$ be the $\wp$-th cyclotomic function field, and let $\cO_{\wp}$ be the ring of integers of $\bK_{\wp}$. To rule out the trivialities, we further assume that 
\begin{itemize}

\item [$(\star)$] $q > 2$ or $\deg(\wp) > 1$. 

\end{itemize}

We denote by $\Gal(\bK_{\wp}/\k)$ the Galois group of $\bK_{\wp}/\k$. There is an isomorphism between $\Gal(\bK_{\wp}/\k)$ and the multiplicative group $(\A/\wp\A)^{\times}$ (see Rosen \cite[Chapter 12]{Rosen}). For each element $m \in (\A/\wp\A)^{\times}$, there exists an isomorphism $\sigma_m \in \Gal(\bK_{\wp}/\k)$ that is uniquely determined by the relation $\sigma_m(\lambda_{\wp}) = C_m(\lambda_{\wp})$. The correspondence $\sigma_m \mapsto m$ is an isomorphism from $\Gal(\bK_{\wp}/\k)$ into the group $(\A/\wp\A)^{\times}$.

Throughout the paper, fix an element $\fg \in \A$ such that $\fg$ is a primitive root modulo $\wp$, i.e., $\fg$ is a generator of the group $(\A/\wp\A)^{\times}$. Note that the order of $(\A/\wp\A)^{\times}$ is $\Phi(\wp) = q^{\deg(\wp)} - 1$, and thus one can write $(\A/\wp\A)^{\times} = \langle \fg \rangle = \{\fg^e \; | \; 0 \le e \le q^{\deg(\wp)} - 2 \}$. We will prove that the isomorphism $\sigma_{\fg} \in \Gal(\bK_{\wp}/\k)$ is a generator of the cyclic group $ \Gal(\bK_{\wp}/\k)$, i.e., for each $0 \le e \le q^{\deg(\wp)} - 2$, $\sigma_{\fg^e}(\lambda_{\wp}) = \sigma_{\fg}^e(\lambda_{\wp})$. Indeed, take an arbitrary integer $e$ with $2 \le e \le q^{\deg(\wp)} - 2$. By induction, we see that
\begin{align*}
\sigma_{\fg^e}(\lambda_{\wp}) = C_{\fg^e}(\lambda_{\wp}) &= C_{\fg^{e - 1}}(C_{\fg}(\lambda_{\wp})) \\
&= C_{\fg^{e - 1}}(\sigma_{\fg}(\lambda_{\wp})) \\
&= \sigma_{\fg}(C_{\fg^{e - 1}}(\lambda_{\wp})) \\
&= \sigma_{\fg}(\sigma_{\fg^{e - 1}}(\lambda_{\wp})) \\
&= \sigma_{\fg}(\sigma_{\fg}^{e - 1}(\lambda_{\wp})) \\
&= \sigma_{\fg}^e(\lambda_{\wp}).
\end{align*}

We summarize the above discussion in the following.

\begin{proposition}
\label{P1}

The isomorphism $\sigma_{\fg} \in \Gal(\bK_{\wp}/\k)$ is a generator for the cyclic group $\Gal(\bK_{\wp}/\k)$. More precisely, $\sigma_{\fg^e}(\lambda_{\wp}) = \sigma_{\fg}^e(\lambda_{\wp})$ for each $0 \le e \le q^{\deg(\wp)} - 2$.

\end{proposition}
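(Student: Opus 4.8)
The plan is to reduce the statement to two structural facts and then run a short induction. The first fact is that the Carlitz action is multiplicative, $C_{ab} = C_a \circ C_b$ for all $a, b \in \A$, which is immediate from $C$ being an $\bF_q$-algebra homomorphism into the twisted polynomial ring. The second is that every $\k$-automorphism $\sigma \in \Gal(\bK_{\wp}/\k)$ commutes with the Carlitz operators, i.e., $\sigma(C_a(\mu)) = C_a(\sigma(\mu))$ for $a \in \A$ and $\mu \in \bK_{\wp}$; this holds because $C_a(x)$ is a polynomial whose coefficients lie in $\A \subset \k$, and $\sigma$ fixes $\k$ pointwise.

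Granting these, I would prove the displayed identity $\sigma_{\fg^e}(\lambda_{\wp}) = \sigma_{\fg}^e(\lambda_{\wp})$ by induction on $e$. The cases $e = 0$ and $e = 1$ are immediate, since $\sigma_{\fg^0} = \sigma_1 = \mathrm{id}$ and $\sigma_{\fg^1} = \sigma_{\fg}$. For the inductive step I would compute
\begin{align*}
\sigma_{\fg^e}(\lambda_{\wp}) = C_{\fg^e}(\lambda_{\wp}) = C_{\fg^{e-1}}(C_{\fg}(\lambda_{\wp})) = C_{\fg^{e-1}}(\sigma_{\fg}(\lambda_{\wp})),
\end{align*}
using multiplicativity of $C$ and the defining relation $\sigma_{\fg}(\lambda_{\wp}) = C_{\fg}(\lambda_{\wp})$. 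Then I would pull $\sigma_{\fg}$ out of the Carlitz operator via the commutation fact to obtain $\sigma_{\fg}(C_{\fg^{e-1}}(\lambda_{\wp})) = \sigma_{\fg}(\sigma_{\fg^{e-1}}(\lambda_{\wp}))$, and finish by invoking the induction hypothesis to get $\sigma_{\fg}(\sigma_{\fg}^{e-1}(\lambda_{\wp})) = \sigma_{\fg}^e(\lambda_{\wp})$.

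To conclude that $\sigma_{\fg}$ generates $\Gal(\bK_{\wp}/\k)$, I would invoke the isomorphism $\sigma_m \mapsto m$ between $\Gal(\bK_{\wp}/\k)$ and $(\A/\wp\A)^{\times}$ recalled in Subsection \ref{Subsection-Galois-group}. Since a $\k$-automorphism of $\bK_{\wp}$ is determined by its value on $\lambda_{\wp}$, the identity just proved shows $\sigma_{\fg}^e = \sigma_{\fg^e}$ for every $e$. As $e$ ranges over $0, \ldots, q^{\deg(\wp)} - 2$, the powers $\fg^e$ exhaust $(\A/\wp\A)^{\times}$ because $\fg$ is a primitive root, so the $\sigma_{\fg}^e$ exhaust $\Gal(\bK_{\wp}/\k)$. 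Hence $\sigma_{\fg}$ has order $\Phi(\wp) = |\Gal(\bK_{\wp}/\k)|$ and generates the group.

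The bulk of the argument is formal, and the one step carrying genuine content is the commutation of $\sigma_{\fg}$ with the Carlitz operator $C_{\fg^{e-1}}$. The main point to verify carefully is that $\sigma_{\fg}$ acts trivially on the coefficients of $C_{\fg^{e-1}}$; this is exactly where it matters that these coefficients lie in $\A$, and hence in the fixed field $\k$ of the extension. Everything else follows by bookkeeping with the isomorphism to $(\A/\wp\A)^{\times}$.
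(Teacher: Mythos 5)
Your proposal is correct and follows essentially the same route as the paper: the paper's proof is exactly the induction $\sigma_{\fg^e}(\lambda_{\wp}) = C_{\fg^{e-1}}(C_{\fg}(\lambda_{\wp})) = \sigma_{\fg}(C_{\fg^{e-1}}(\lambda_{\wp})) = \sigma_{\fg}^e(\lambda_{\wp})$, resting on multiplicativity of the Carlitz module and the fact that Galois elements commute with $C_a$ because its coefficients lie in $\k$. Your added remarks on why these two facts hold, and the concluding appeal to the isomorphism with $(\A/\wp\A)^{\times}$, only make explicit what the paper leaves implicit.
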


Throughout the paper, we denote by $\Nm_{\bK_{\wp}/\k}(\cdot)$ the norm from $\bK_{\wp}$ to $\k$. Proposition \ref{P1} implies that for each element $\alpha \in \bK_{\wp}^{\times}$, 
\begin{align*}
\Nm_{\bK_{\wp}/\k}(\alpha) = \prod_{\sigma \in \Gal(\bK_{\wp}/\k)}\sigma(\alpha) =  \prod_{e = 0}^{q^{\deg(\wp)} - 2}\sigma_{\fg}^e(\alpha).
\end{align*}

 The following elementary result will be useful in the proof of our main theorem.

\begin{proposition}
\label{P-Hilbert-Satz-90-for-units}

Let $\alpha$ be a unit in $\cO_{\wp}$ of norm $1$. Then there exists an element $\delta \in \cO_{\wp}$ such that
\begin{align*}
\alpha = \dfrac{\delta}{\sigma_{\fg}(\delta)}.
\end{align*}

\end{proposition}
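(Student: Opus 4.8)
The plan is to carry out the classical explicit construction underlying Hilbert's Satz 90 for the cyclic extension $\bK_{\wp}/\k$, and then to check that the unit hypothesis forces the resulting element to lie in the ring of integers $\cO_{\wp}$, not merely in $\bK_{\wp}$. Write $n = \Phi(\wp) = q^{\deg(\wp)} - 1$ for the degree of $\bK_{\wp}/\k$; by Proposition \ref{P1} the Galois group is cyclic of order $n$, generated by $\sigma_{\fg}$. For a parameter $\theta \in \bK_{\wp}$, I would form the Satz 90 resolvent
\begin{align*}
\delta = \sum_{i=0}^{n-1} b_i\, \sigma_{\fg}^i(\theta), \qquad b_0 = 1, \quad b_i = \prod_{j=0}^{i-1}\sigma_{\fg}^j(\alpha) \;\; (1 \le i \le n-1).
\end{align*}

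First I would record the telescoping identity that makes this work. Since $\sigma_{\fg}(b_i) = \prod_{j=1}^{i}\sigma_{\fg}^j(\alpha)$, one has $\alpha\,\sigma_{\fg}(b_i) = b_{i+1}$ for $0 \le i \le n-1$, where the top product is $b_n = \prod_{j=0}^{n-1}\sigma_{\fg}^j(\alpha) = \Nm_{\bK_{\wp}/\k}(\alpha) = 1$ by the norm formula following Proposition \ref{P1} and the hypothesis that $\alpha$ has norm $1$. Applying $\alpha\,\sigma_{\fg}(\cdot)$ to $\delta$ and reindexing, using $\sigma_{\fg}^n = \mathrm{id}$ together with $b_n = 1 = b_0$, the sum collapses back to $\delta$, yielding the key relation $\alpha\,\sigma_{\fg}(\delta) = \delta$. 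Provided $\delta \ne 0$, this is exactly $\alpha = \delta/\sigma_{\fg}(\delta)$, as required.

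It then remains to choose $\theta$ so that $\delta$ is a nonzero element of $\cO_{\wp}$. For integrality: because $\alpha$ is a unit in $\cO_{\wp}$, each conjugate $\sigma_{\fg}^j(\alpha)$ is again a unit in $\cO_{\wp}$ (the Galois automorphisms preserve $\cO_{\wp}$), so every $b_i$ lies in $\cO_{\wp}$; hence $\theta \in \cO_{\wp}$ forces $\delta \in \cO_{\wp}$. For nonvanishing: the assignment $\theta \mapsto \delta$ is a $\k$-linear map $\bK_{\wp} \to \bK_{\wp}$ (the $\sigma_{\fg}^i$ fix $\k$), and it is not identically zero, since the distinct automorphisms $\mathrm{id}, \sigma_{\fg}, \ldots, \sigma_{\fg}^{n-1}$ are $\bK_{\wp}$-linearly independent by Dedekind's theorem on independence of characters while $b_0 = 1 \ne 0$. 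As $\cO_{\wp}$ contains a $\k$-basis of $\bK_{\wp}$ (for instance $1, \lambda_{\wp}, \ldots, \lambda_{\wp}^{\,n-1}$), a $\k$-linear map vanishing on all of $\cO_{\wp}$ would vanish on $\bK_{\wp}$; therefore some $\theta \in \cO_{\wp}$ produces $\delta \ne 0$. This $\delta$ completes the argument.

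The main obstacle I anticipate is not the algebraic identity, which is the standard Satz 90 computation, but rather upgrading the existence of $\delta$ from $\bK_{\wp}$ to $\cO_{\wp}$: one must simultaneously secure integrality of $\delta$ and its nonvanishing. The unit hypothesis on $\alpha$ is precisely what makes the coefficients $b_i$ integral, and the fact that $\cO_{\wp}$ spans $\bK_{\wp}$ over $\k$ is what allows Dedekind independence to be invoked with an integral choice of $\theta$.
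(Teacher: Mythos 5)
Your proof is correct, but it takes a genuinely different route from the paper's. The paper invokes Hilbert's Satz 90 as a black box to obtain $\gamma \in \bK_{\wp}$ with $\alpha = \gamma/\sigma_{\fg}(\gamma)$, writes $\gamma = \bigl(\sum_i a_i\lambda_{\wp}^i\bigr)/b$ with $a_i \in \A$ and a common denominator $b \in \A$, and observes that since $b \in \k$ is fixed by $\sigma_{\fg}$ it cancels in the quotient, leaving $\delta = b\gamma \in \A[\lambda_{\wp}] = \cO_{\wp}$. You instead re-derive Satz 90 from scratch via the classical resolvent $\delta = \sum_i b_i\,\sigma_{\fg}^i(\theta)$, verifying the telescoping identity $\alpha\,\sigma_{\fg}(\delta) = \delta$, securing integrality from the fact that the partial norms $b_i$ are units in $\cO_{\wp}$ (here you use that $\alpha$ is a unit, not merely a norm-one field element), and securing nonvanishing by Dedekind independence of characters together with the observation that $\cO_{\wp}$ contains a $\k$-basis of $\bK_{\wp}$. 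Each step checks out. What your approach buys is a self-contained, fully explicit construction with no appeal to an external theorem; what the paper's denominator-clearing buys is brevity and slightly greater generality (it shows that \emph{any} norm-one element of $\bK_{\wp}^{\times}$, unit or not, is a quotient $\delta/\sigma_{\fg}(\delta)$ with $\delta$ integral, since it never uses the unit hypothesis). Both are perfectly adequate for the role this proposition plays in the proof of Theorem \ref{MT}.
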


\begin{proof}

Applying Hilbert's Satz 90 (see Lang \cite{Lang}), there exists an element $\gamma \in \bK_{\wp}$ such that
\begin{align}
\label{E1-in-Hilbert-Satz-90}
\alpha = \dfrac{\gamma}{\sigma_{\fg}(\gamma)}.
\end{align}
Since $\bK_{\wp} = \k(\lambda_{\wp})$, one can write 
\begin{align*}
\gamma = \dfrac{\sum_{i}a_i\lambda_{\wp}^i}{b},
\end{align*}
where the $a_i$ are in $\A$, and $b \in \A^{\times}$. From \eqref{E1-in-Hilbert-Satz-90}, one gets
\begin{align*}
\alpha = \dfrac{ \dfrac{\sum_{i}a_i\lambda_{\wp}^i}{b}}{\sigma_{\fg}\left( \dfrac{\sum_{i}a_i\lambda_{\wp}^i}{b}\right)} = \dfrac{\delta}{\sigma_{\fg}(\delta)},
\end{align*}
where $\delta = \sum_{i}a_i\lambda_{\wp}^i \in \A[\lambda_{\wp}]$. Since $\cO_{\wp} = \A[\lambda_{\wp}]$ (see Rosen \cite[Proposition 12.9]{Rosen}), our contention follows.

\end{proof}

\subsection{Representation of integers in $\cO_{\wp}$, and the multiplicative group $\cG_{\wp}$}
\label{Subsection-G-wp}

It is well-known (see \cite[Proposition 12.9]{Rosen}) that $\cO_{\wp} = \A[\lambda_{\wp}]$. For each polynomial $\cP(x) \in \A[x]$ of degree $\le q^{\deg(\wp)} - 2$, $\cP(\lambda_{\wp})$ is an integer in $\cO_{\wp}$. Conversely we will prove that for each integer $\alpha \in \cO_{\wp}$, there exists a unique polynomial $\cP_{\alpha}(x) \in \A[x]$ of degree $\le q^{\deg(\wp)} - 2$ such that $\alpha = \cP_{\alpha}(\lambda_{\wp})$. Indeed we know that the minimal polynomial of $\lambda_{\wp}$ is the $\wp$-th cyclotomic polynomial $\Psi_{\wp}(x) \in \A[x]$. From (\ref{Eqn-the-cyclotomic-polynomial-at-a-power-of-prime}), $\Psi_{\wp}(x)$ can be explicitly written in the form
\begin{align*}
\Psi_{\wp}(x) = \dfrac{C_{\wp}(x)}{x},
\end{align*}
and thus $\deg(\Psi_{\wp}(x)) = q^{\deg{\wp}} - 1$. Hence $\{\lambda_{\wp}^e\}_{0 \le e \le q^{\deg(\wp)} - 2}$ is a basis for the $\A$-module $\A[\lambda_{\wp}]$. Hence for each integer $\alpha \in \cO_{\wp}$, there exists a polynomial $\cP_{\alpha}(x) \in \A[x]$ of degree  $\le q^{\deg(\wp)} - 2$ such that $\alpha = \cP_{\alpha}(\lambda_{\wp})$. We prove that $\cP_{\alpha}$ is unique. Assume the contrary, i.e., there exists a polynomial $\cQ_{\alpha}(x) \in \A[x]$ of degree $\le q^{\deg(\wp)} - 2$ such that $\cQ_{\alpha}(x) \ne \cP_{\alpha}(x)$, and $\alpha = \cQ_{\alpha}(\lambda_{\wp})$. Setting $F(x) =  \cP_{\alpha}(x) - \cQ_{\alpha}(x) \in \A[x]$, we deduce that
\begin{align*}
F(\lambda_{\wp}) = \cP_{\alpha}(\lambda_{\wp}) - \cQ_{\alpha}(\lambda_{\wp}) = 0.
\end{align*}
Since $\cQ_{\alpha}(x) \ne \cP_{\alpha}(x)$, the polynomial $F(x)$ is nonzero. Furthermore $F(x)$ is of degree at most $q^{\deg(\wp)} - 2$, which is a contradiction since $\Psi_{\wp}(x)$ is the minimal polynomial of $\lambda_{\wp}$ and $\deg(\Psi_{\wp}(x)) = q^{\deg{\wp}} - 1 > q^{\deg(\wp)} - 2$. 

We summarize the above discussion in the following.

\begin{proposition}
\label{P2}

For each integer $\alpha \in \cO_{\wp}$, there exists a unique polynomial $\cP_{\alpha}(x) \in \A[x]$ of degree at most $q^{\deg(\wp)} - 2$ such that $\alpha = \cP_{\alpha}(\lambda_{\wp})$. Furthermore $\alpha = 0$ if and only if the polynomial $\cP_{\alpha}(x)$ is identical to zero.

\end{proposition}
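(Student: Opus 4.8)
The plan is to show that $\{\lambda_{\wp}^e : 0 \le e \le q^{\deg(\wp)} - 2\}$ is a free $\A$-basis of $\cO_{\wp}$, from which both the existence and the uniqueness assertions follow at once. The two facts I would invoke are that $\cO_{\wp} = \A[\lambda_{\wp}]$ (Rosen \cite[Proposition 12.9]{Rosen}) and that the minimal polynomial of $\lambda_{\wp}$ over $\k$ is the cyclotomic polynomial $\Psi_{\wp}(x) \in \A[x]$, which by \eqref{Eqn-the-cyclotomic-polynomial-at-a-power-of-prime} equals $C_{\wp}(x)/x$ and is therefore, by \eqref{Eqn-the-cyclotomic-polynomial-at-prime}, monic of degree exactly $\Phi(\wp) = q^{\deg(\wp)} - 1$.

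For existence, since $\cO_{\wp} = \A[\lambda_{\wp}]$, every integer $\alpha \in \cO_{\wp}$ is an $\A$-linear combination of powers $\lambda_{\wp}^i$. Whenever a power $\lambda_{\wp}^i$ with $i \ge q^{\deg(\wp)} - 1$ occurs, I would use the relation $\Psi_{\wp}(\lambda_{\wp}) = 0$ to rewrite the leading power $\lambda_{\wp}^{q^{\deg(\wp)} - 1}$ as an $\A$-combination of strictly lower powers, and iterate; because $\Psi_{\wp}$ is monic of degree $q^{\deg(\wp)} - 1$, this reduction stays inside $\A[x]$ and terminates. The outcome is a polynomial $\cP_{\alpha}(x) \in \A[x]$ of degree at most $q^{\deg(\wp)} - 2$ with $\alpha = \cP_{\alpha}(\lambda_{\wp})$.

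Uniqueness rests on a degree argument. If $\cP_{\alpha}$ and $\cQ_{\alpha}$ both represent $\alpha$ and both have degree at most $q^{\deg(\wp)} - 2$, then $F = \cP_{\alpha} - \cQ_{\alpha}$ lies in $\A[x]$, satisfies $F(\lambda_{\wp}) = 0$, and has degree at most $q^{\deg(\wp)} - 2 < q^{\deg(\wp)} - 1 = \deg \Psi_{\wp}$. Since $\Psi_{\wp}$ is the minimal polynomial of $\lambda_{\wp}$ over $\k$, any nonzero element of $\A[x] \subseteq \k[x]$ vanishing at $\lambda_{\wp}$ has degree at least $\deg \Psi_{\wp}$; hence $F = 0$ and $\cP_{\alpha} = \cQ_{\alpha}$. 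The final equivalence is then formal: $\cP_{\alpha} = 0$ trivially yields $\alpha = 0$, and conversely, since the zero polynomial already represents $0$ and representations are unique, $\alpha = 0$ forces $\cP_{\alpha} = 0$.

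I do not expect a serious obstacle here; the only point needing care is to pin down that $\Psi_{\wp}$ is monic of degree $q^{\deg(\wp)} - 1$, so that the prescribed bound $q^{\deg(\wp)} - 2$ on $\deg \cP_{\alpha}$ is exactly one less than $\deg \Psi_{\wp}$. This is precisely what makes the reduction terminate at the correct place and makes the annihilator $F$ in the uniqueness step too small in degree to be nonzero; both properties are read off directly from \eqref{Eqn-the-cyclotomic-polynomial-at-prime}.
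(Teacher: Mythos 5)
Your proof is correct and follows essentially the same route as the paper: both rest on $\cO_{\wp} = \A[\lambda_{\wp}]$ together with the fact that $\Psi_{\wp}(x) = C_{\wp}(x)/x$ is monic of degree $q^{\deg(\wp)} - 1$, giving $\{\lambda_{\wp}^e\}_{0 \le e \le q^{\deg(\wp)}-2}$ as an $\A$-basis, with uniqueness by the same degree comparison against the minimal polynomial. Your explicit remark that monicity is what keeps the reduction of higher powers inside $\A[x]$ is a point the paper leaves implicit, but the argument is the same.
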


For the rest of the paper, for each integer $\alpha \in \cO_{\wp}$, we always denote by $\cP_{\alpha}$ the unique polynomial satisfying $\alpha = \cP_{\alpha}(\lambda_{\wp})$ in Proposition \ref{P2}. We call $\cP_{\alpha}$ the \textit{polynomial representing $\alpha$}.

\begin{proposition}
\label{P3}

Let $\alpha$ be a nonzero element in $\cO_{\wp}$, and let $\cP_{\alpha}(x) \in \A[x]$ be the polynomial representing $\alpha$. If $\cQ(x)$ is a polynomial in $\A[x]$ such that $\cQ(\lambda_{\wp}) = \alpha$, then
\begin{align*}
\dfrac{\cQ(\lambda_{\wp})}{\cQ(C_{\fg}(\lambda_{\wp}))} = \dfrac{\cP_{\alpha}(\lambda_{\wp})}{\cP_{\alpha}(C_{\fg}(\lambda_{\wp}))} = \dfrac{\alpha}{\sigma_{\fg}(\alpha)}.
\end{align*}

\end{proposition}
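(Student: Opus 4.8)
The plan is to exploit the single structural fact that $\sigma_{\fg}$ is a field automorphism of $\bK_{\wp}$ fixing $\k$ (and hence $\A$) pointwise, together with the defining relation $\sigma_{\fg}(\lambda_{\wp}) = C_{\fg}(\lambda_{\wp})$ recalled in Subsection \ref{Subsection-Galois-group}. Since both $\cP_{\alpha}$ and $\cQ$ have coefficients in $\A \subset \k$, and $\sigma_{\fg}$ fixes these coefficients, evaluation of such polynomials commutes with $\sigma_{\fg}$. This is the only idea needed; the two claimed equalities then follow by chasing the evaluations at $\lambda_{\wp}$ and at $C_{\fg}(\lambda_{\wp})$.

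Concretely, I would first establish the middle-to-right equality. Because $\sigma_{\fg}$ is a ring homomorphism fixing each coefficient of $\cP_{\alpha}$, I would write
\[
\sigma_{\fg}(\alpha) = \sigma_{\fg}(\cP_{\alpha}(\lambda_{\wp})) = \cP_{\alpha}(\sigma_{\fg}(\lambda_{\wp})) = \cP_{\alpha}(C_{\fg}(\lambda_{\wp})),
\]
where the last step uses $\sigma_{\fg}(\lambda_{\wp}) = C_{\fg}(\lambda_{\wp})$. Combined with $\cP_{\alpha}(\lambda_{\wp}) = \alpha$ this yields $\cP_{\alpha}(\lambda_{\wp})/\cP_{\alpha}(C_{\fg}(\lambda_{\wp})) = \alpha/\sigma_{\fg}(\alpha)$.

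Next I would run the identical computation for $\cQ$. Since $\cQ(\lambda_{\wp}) = \alpha$ by hypothesis and $\cQ$ also lies in $\A[x]$, the same three-step application of $\sigma_{\fg}$ gives $\cQ(C_{\fg}(\lambda_{\wp})) = \sigma_{\fg}(\alpha)$, and hence $\cQ(\lambda_{\wp})/\cQ(C_{\fg}(\lambda_{\wp})) = \alpha/\sigma_{\fg}(\alpha)$ as well. Transitivity then delivers the full chain of equalities in the statement. Finally I would check that every quotient is well defined: since $\alpha \neq 0$ and $\sigma_{\fg}$ is injective, $\sigma_{\fg}(\alpha) \neq 0$, and this common value is exactly the denominator appearing in each fraction.

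There is essentially no obstacle here; the proposition is the observation that a $\k$-automorphism commutes with evaluation of $\A$-coefficient polynomials. The only point deserving a moment's care is the non-vanishing of the denominators $\cP_{\alpha}(C_{\fg}(\lambda_{\wp}))$ and $\cQ(C_{\fg}(\lambda_{\wp}))$, which is settled by identifying both with $\sigma_{\fg}(\alpha)$ and invoking $\alpha \neq 0$.
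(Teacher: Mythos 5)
Your proposal is correct and follows exactly the paper's argument: both rest on the identity $\cQ(C_{\fg}(\lambda_{\wp})) = \cQ(\sigma_{\fg}(\lambda_{\wp})) = \sigma_{\fg}(\cQ(\lambda_{\wp})) = \sigma_{\fg}(\alpha)$ (and likewise for $\cP_{\alpha}$), with the nonvanishing of the denominators coming from $\alpha \neq 0$ and injectivity of $\sigma_{\fg}$. No differences worth noting.
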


\begin{remark}
\label{R1}

Since $\alpha = \cQ(\lambda_{\wp}) \ne 0$, and 
\begin{align*}
\cQ(C_{\fg}(\lambda_{\wp})) = \cQ(\sigma_{\fg}(\lambda_{\wp})) = \sigma_{\fg}(\cQ(\lambda_{\wp})),
\end{align*}
$\cQ(C_{\fg}(\lambda_{\wp}))$ is also nonzero. Similarly $\cP(C_{\fg}(\lambda_{\wp}))$ is nonzero.

\end{remark}

\begin{proof}

From the assumption and Remark \ref{R1}, we know that 
\begin{align*}
\dfrac{\cQ(\lambda_{\wp})}{\cQ(C_{\fg}(\lambda_{\wp}))} = \dfrac{\alpha}{\sigma_{\fg}(\alpha)}.
\end{align*}
Similarly one also gets 
\begin{align*}
 \dfrac{\cP_{\alpha}(\lambda_{\wp})}{\cP_{\alpha}(C_{\fg}(\lambda_{\wp}))} = \dfrac{\alpha}{\sigma_{\fg}(\alpha)},
 \end{align*}
 and thus Proposition \ref{P3} follows.

\end{proof}

We now introduce the multiplicative group $\cG_{\wp}$ that will be of interest in this paper. Let $\cG_{\wp}$ be the set consisting of all elements of the form $\dfrac{\cQ(\lambda_{\wp})}{\cQ(C_{\fg}(\lambda_{\wp}))}$, where $\cQ(x) \in \A[x]$ such that $\cQ(\lambda_{\wp})$ is a unit in $\cO_{\wp}$. From Proposition \ref{P3}, we see that all the elements of $\cG_{\wp}$ are units in $\cO_{\wp}$. Furthermore the set $\cG_{\wp}$ is invariant under the multiplication, i.e., $\epsilon \gamma \in \cG_{\wp}$ for any $\epsilon, \gamma \in \cG_{\wp}$. For an arbitrary element $\dfrac{\cQ(\lambda_{\wp})}{\cQ(C_{\fg}(\lambda_{\wp}))} \in \cG_{\wp}$, where $\cQ(x) \in \A[x]$ such that $\alpha = \cQ(\lambda_{\wp})$ is a unit in $\cO_{\wp}$. Note that $\alpha^{-1}$ is a unit in $\cO_{\wp}$. Let $\cP_{\alpha^{-1}}(x) \in \A[x]$ be the polynomial representing $\alpha^{-1}$. From Proposition \ref{P3}, $\dfrac{\cP_{\alpha^{-1}}(\lambda_{\wp})}{\cP_{\alpha^{-1}}(C_{\fg}(\lambda_{\wp}))} = \dfrac{\alpha^{-1}}{\sigma_{\fg}(\alpha^{-1})}$, and thus $\dfrac{\alpha^{-1}}{\sigma_{\fg}(\alpha^{-1})}$ belongs to $\cG_{\wp}$. By Proposition \ref{P3}, we deduce that
\begin{align*}
\dfrac{\cQ(\lambda_{\wp})}{\cQ(C_{\fg}(\lambda_{\wp}))}\dfrac{\alpha^{-1}}{\sigma_{\fg}(\alpha^{-1})} = \dfrac{\alpha}{\sigma_{\fg}(\alpha)}\dfrac{\alpha^{-1}}{\sigma_{\fg}(\alpha^{-1})} = 1,
\end{align*}
which proves that $\dfrac{\alpha^{-1}}{\sigma_{\fg}(\alpha^{-1})}$ is the inverse of $\dfrac{\cQ(\lambda_{\wp})}{\cQ(C_{\fg}(\lambda_{\wp}))}$. Hence $\cG_{\wp}$ is a multiplicative group.

Throughout this paper, we denote by $\cU_{\star}(\cO_{\wp})$ be the group of units of norm 1 in $\cO_{\wp}$. The following result follows immediately from the above discussion and Proposition \ref{P3}.

\begin{proposition}
\label{P4}

\begin{itemize}

\item []

\item [(i)] $\cG_{\wp}$ is a subgroup of $\cU_{\star}(\cO_{\wp})$.

\item [(ii)] 
\begin{align*}
\cG_{\wp} = \left\{\dfrac{\alpha}{\sigma_{\fg}(\alpha)} \; | \; \text{$\alpha$ is a unit in $\cO_{\wp}$} \right\}.
\end{align*}

\item [(iii)]
\begin{align*}
\cG_{\wp} = \left\{\dfrac{\cP_{\alpha}(\lambda_{\wp})}{\cP_{\alpha}(C_{\fg}(\lambda_{\wp}))} \; | \; \text{$\alpha$ is a unit in $\cO_{\wp}$, and $\cP_{\alpha}(x) \in \A[x]$ is the polynomial representing $\alpha$} \right\}.
\end{align*}

\end{itemize}

\end{proposition}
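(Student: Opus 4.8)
The plan is to harvest everything from Proposition \ref{P3} together with the group structure already established in the discussion preceding the statement. The crucial observation is that Proposition \ref{P3} supplies three equal expressions for a single element of $\cG_{\wp}$: for any $\cQ(x) \in \A[x]$ with $\cQ(\lambda_{\wp}) = \alpha$ a unit, one has
\[
\dfrac{\cQ(\lambda_{\wp})}{\cQ(C_{\fg}(\lambda_{\wp}))} = \dfrac{\cP_{\alpha}(\lambda_{\wp})}{\cP_{\alpha}(C_{\fg}(\lambda_{\wp}))} = \dfrac{\alpha}{\sigma_{\fg}(\alpha)}.
\]
This simultaneously identifies the three candidate descriptions appearing in (ii) and (iii), so once (i) is in place the remaining work is a clean set-theoretic comparison.

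For part (i), I would argue as follows. We already know from the discussion above the statement that $\cG_{\wp}$ is a multiplicative group consisting of units of $\cO_{\wp}$, so it suffices to verify that each element has norm $1$. Taking an arbitrary element of $\cG_{\wp}$, Proposition \ref{P3} shows it equals $\dfrac{\alpha}{\sigma_{\fg}(\alpha)}$ for some unit $\alpha \in \cO_{\wp}$. Since $\Nm_{\bK_{\wp}/\k}(\cdot)$ takes values in $\k$ and is therefore invariant under the action of any element of $\Gal(\bK_{\wp}/\k)$, we have $\Nm_{\bK_{\wp}/\k}(\sigma_{\fg}(\alpha)) = \Nm_{\bK_{\wp}/\k}(\alpha)$; equivalently, $\sigma_{\fg}$ merely permutes the factors in the product $\prod_{e = 0}^{q^{\deg(\wp)} - 2}\sigma_{\fg}^e(\alpha)$ computed via Proposition \ref{P1}. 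Hence $\Nm_{\bK_{\wp}/\k}\left(\dfrac{\alpha}{\sigma_{\fg}(\alpha)}\right) = 1$, so $\cG_{\wp} \subseteq \cU_{\star}(\cO_{\wp})$, which gives (i).

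For parts (ii) and (iii), I would prove the two set inclusions in each case. The inclusion of $\cG_{\wp}$ into each displayed set is immediate from Proposition \ref{P3}, since every generator $\dfrac{\cQ(\lambda_{\wp})}{\cQ(C_{\fg}(\lambda_{\wp}))}$ of $\cG_{\wp}$ already equals both $\dfrac{\alpha}{\sigma_{\fg}(\alpha)}$ and $\dfrac{\cP_{\alpha}(\lambda_{\wp})}{\cP_{\alpha}(C_{\fg}(\lambda_{\wp}))}$ with $\alpha = \cQ(\lambda_{\wp})$. For the reverse inclusions, I would start with an arbitrary unit $\alpha \in \cO_{\wp}$; using $\cO_{\wp} = \A[\lambda_{\wp}]$ (Rosen \cite[Proposition 12.9]{Rosen}), write $\alpha = \cP_{\alpha}(\lambda_{\wp})$ with $\cP_{\alpha}$ the polynomial representing $\alpha$ from Proposition \ref{P2}. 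Since $\cP_{\alpha}(\lambda_{\wp}) = \alpha$ is a unit, the element $\dfrac{\cP_{\alpha}(\lambda_{\wp})}{\cP_{\alpha}(C_{\fg}(\lambda_{\wp}))}$ is by definition a member of $\cG_{\wp}$, and Proposition \ref{P3} identifies it with $\dfrac{\alpha}{\sigma_{\fg}(\alpha)}$. This shows both displayed sets are contained in $\cG_{\wp}$, completing (ii) and (iii).

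There is essentially no serious obstacle here; the proposition is a bookkeeping consequence of Proposition \ref{P3} and the group-theoretic remarks made just above the statement. The one point requiring a little care — and the only place where genuinely new input enters — is the norm computation in (i): one must invoke the Galois-invariance of the norm to conclude $\Nm_{\bK_{\wp}/\k}(\sigma_{\fg}(\alpha)) = \Nm_{\bK_{\wp}/\k}(\alpha)$, rather than attempting to evaluate the two norms separately. Everything else amounts to matching up the three equal fractions furnished by Proposition \ref{P3}.
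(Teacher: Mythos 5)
Your proposal is correct and follows essentially the same route as the paper, which simply asserts that Proposition \ref{P4} ``follows immediately from the above discussion and Proposition \ref{P3}''; you have merely made explicit the set-theoretic inclusions and the norm computation $\Nm_{\bK_{\wp}/\k}\left(\dfrac{\alpha}{\sigma_{\fg}(\alpha)}\right) = 1$ that the paper leaves implicit. No further comment is needed.
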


\section{Representation of units}
\label{S-main-results}

In this section, we will prove Theorem \ref{MT-in-S1} (see Theorem \ref{MT} and Corollary \ref{C1-MT}). As a consequence, we obtain a refinement of Hilbert's Satz 90 (see Corollary \ref{C3-MT}). We begin by proving several lemmas that we will need in the proof of our main theorem.

The next result is a function field analogue of Newman \cite[Lemma 1]{Newman}. We follow the same ideas as in the proof of Newman \cite[Lemma 1]{Newman} to prove the next lemma. 

\begin{lemma}
\label{L1}

Let $\alpha$ be an integer in $\cO_{\wp}$, and let $\cP_{\alpha}(x) \in \A[x]$ be the polynomial representing $\alpha$. Assume that the following are true.
\begin{itemize}

\item [(i)] $\dfrac{\cP_{\alpha}(C_{\fg}(\lambda_{\wp}))}{\cP_{\alpha}(\lambda_{\wp})}$ is a unit in $\cO_{\wp}$. $(\text{Recall that $\fg$ is a generator of the group $(\A/\wp\A)^{\times}$.})$

\item [(ii)] $\gcd(\cP_{\alpha}(0), \wp) = 1$.

\item [(iii)] the content of the polynomial $\cP_{\alpha}(x)$ is $1$, i.e., the greatest common divisor of all coefficients of $\cP_{\alpha}(x)$ is $1$.

\end{itemize}
Then $\alpha$ is a unit in $\cO_{\wp}$. 

\end{lemma}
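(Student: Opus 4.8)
The plan is to show that the principal ideal $(\alpha)$ of $\cO_{\wp}$ is the whole ring, which is equivalent to $\alpha$ being a unit. First I would note that $\cP_{\alpha}(C_{\fg}(\lambda_{\wp})) = \sigma_{\fg}(\cP_{\alpha}(\lambda_{\wp})) = \sigma_{\fg}(\alpha)$, so hypothesis (i) says precisely that $\sigma_{\fg}(\alpha)/\alpha$ is a unit of $\cO_{\wp}$. Applying the automorphisms $\sigma_{\fg}^{e}$, which carry units to units, the consecutive ratios $\sigma_{\fg}^{e}(\alpha)/\sigma_{\fg}^{e-1}(\alpha)$ are all units, so by Proposition \ref{P1} every conjugate $\sigma_{\fg}^{e}(\alpha)$ is an associate of $\alpha$. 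Consequently the ideal $(\alpha)$ is invariant under the whole group $\Gal(\bK_{\wp}/\k)$. (Here $\alpha \neq 0$, since otherwise $\cP_{\alpha}$ is the zero polynomial and both (ii) and (iii) fail.)

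Next I would translate the remaining two hypotheses into non-divisibility statements. Since $\{1,\lambda_{\wp},\dots,\lambda_{\wp}^{\Phi(\wp)-1}\}$ is an $\A$-basis of $\cO_{\wp}$, for any monic prime $\fq$ of $\A$ we have $\fq\cO_{\wp} = \bigoplus_{i}\fq\lambda_{\wp}^{i}$; hence $\alpha = \cP_{\alpha}(\lambda_{\wp}) \in \fq\cO_{\wp}$ if and only if $\fq$ divides every coefficient of $\cP_{\alpha}$, i.e. if and only if $\fq$ divides its content. Thus hypothesis (iii) gives $\alpha \notin \fq\cO_{\wp}$ for every monic prime $\fq$. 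For the distinguished prime $\wp$, I would recall that $\wp$ is totally ramified in $\bK_{\wp}/\k$ with unique prime $(\lambda_{\wp})$ above it and residue field $\A/\wp\A$ (see Rosen \cite[Chapter 12]{Rosen}); reducing $\alpha$ modulo $(\lambda_{\wp})$ yields the image of $\cP_{\alpha}(0)$, which is $\cP_{\alpha}(0) \bmod \wp$ and is nonzero by hypothesis (ii). Hence $\alpha \notin (\lambda_{\wp})$, so the ramified prime does not divide $(\alpha)$.

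To conclude, suppose $\alpha$ is not a unit; then some prime $\mathfrak{P}$ of $\cO_{\wp}$ divides $(\alpha)$, and I set $\fq = \mathfrak{P}\cap\A$. If $\fq = \wp$, then $\mathfrak{P} = (\lambda_{\wp})$, contradicting $\alpha \notin (\lambda_{\wp})$. If $\fq \neq \wp$, then $\fq$ is unramified and $\Gal(\bK_{\wp}/\k)$ acts transitively on the primes above $\fq$; by the Galois-invariance of $(\alpha)$ all of these primes divide $(\alpha)$, and since they are distinct and their product is $\fq\cO_{\wp}$, we get $\fq\cO_{\wp} \mid (\alpha)$, i.e. $\alpha \in \fq\cO_{\wp}$ — contradicting $\alpha \notin \fq\cO_{\wp}$. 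Either way we reach a contradiction, so $(\alpha) = \cO_{\wp}$ and $\alpha$ is a unit. The genuinely delicate step is the structural one: isolating the single ramified prime $\wp$ and invoking transitivity of the Galois action over each unramified prime, together with the bookkeeping that matches $\cP_{\alpha}(0)\bmod\wp$ to the image of $\alpha$ in $\cO_{\wp}/(\lambda_{\wp})$. I expect the main obstacle to be verifying that hypothesis (iii) alone controls all unramified primes while hypothesis (ii) is exactly what is needed at the ramified prime $\wp$, so that neither condition is redundant nor insufficient.
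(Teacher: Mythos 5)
Your argument is correct and follows essentially the same route as the paper's proof: hypothesis (i) makes the ideal $(\alpha)$ Galois-invariant, hypothesis (ii) rules out the unique ramified prime $(\lambda_{\wp})$ via the congruence $\alpha \equiv \cP_{\alpha}(0) \pmod{\lambda_{\wp}}$, and for any other prime divisor transitivity plus unramifiedness forces $\fq\cO_{\wp} \mid (\alpha)$, whence $\fq$ divides the content, contradicting (iii). Your phrasing of the last step through the free $\A$-module structure of $\cO_{\wp}$ is a cleaner packaging of the coefficient computation the paper carries out explicitly, but the underlying ideas are identical.
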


\begin{remark}
\label{R1-in-L1}

By (ii) in Lemma \ref{L1}, one sees that the polynomial $\cP_{\alpha}(x)$ is nonzero, and it thus follows from Proposition \ref{P2} that $\alpha = \cP_{\alpha}(\lambda_{\wp}) \ne 0$.

\end{remark}

\begin{proof}

Assume the contrary, i.e., $\alpha$ is a non-unit in $\cO_{\wp}$. Since $\alpha = \cP_{\alpha}(\lambda_{\wp})$, $\cP_{\alpha}(\lambda_{\wp})$ is also a non-unit in $\cO_{\wp}$. Hence there exists a prime ideal $\fq$ in $\cO_{\wp}$ that divides $\cP_{\alpha}(\lambda_{\wp})$. Thus the ideal $\sigma_{\fg}(\fq)$ is prime, and is a prime ideal divisor of $\sigma_{\fg}(\cP_{\alpha}(\lambda_{\wp}))$. Since  
\begin{align*}
\sigma_{\fg}(\cP_{\alpha}(\lambda_{\wp})) = \cP_{\alpha}(\sigma_{\fg}(\lambda_{\wp})) = \cP_{\alpha}(C_{\fg}(\lambda_{\wp})), 
\end{align*}
it follows that $\sigma_{\fg}(\fq)$ is a prime ideal divisor of $\cP_{\alpha}(C_{\fg}(\lambda_{\wp}))$. We deduce from (i) that $\sigma_{\fg}(\fq)$ is also a prime ideal divisor of $\cP_{\alpha}(\lambda_{\wp})$. Since $\sigma_{\fg}$ is a generator of the Galois group $\Gal(\bK_{\wp}/\k)$, every conjugate of the prime ideal $\fq$ is a prime ideal divisor of $\cP_{\alpha}(\lambda_{\wp})$, i.e., for every element $\sigma \in \Gal(\bK_{\wp}/\k)$, $\sigma(\fq)$ is a prime ideal divisor of $\cP_{\alpha}(\lambda_{\wp})$.

We contend that $\gcd(\cP_{\alpha}(\lambda_{\wp}), \lambda_{\wp}) = 1$; otherwise since $\lambda_{\wp}\cO_{\wp}$ is a prime ideal in $\cO_{\wp}$ (see Rosen \cite[Proposition 12.7]{Rosen}), we deduce that $\lambda_{\wp}\cO_{\wp}$ divides $\cP_{\alpha}(\lambda_{\wp})$, and thus
\begin{align*}
\cP_{\alpha}(0) \equiv \cP_{\alpha}(\lambda_{\wp}) \equiv 0 \pmod{\lambda_{\wp}}.
\end{align*}
Since $\wp\cO_{\wp} = (\lambda_{\wp}\cO_{\wp})^{q^{\deg(\wp)} - 1}$ (see Rosen \cite[Proposition 12.7]{Rosen}), we deduce that $\cP_{\alpha}(0) \equiv 0 \pmod{\wp}$, which is a contradiction to (ii).

Since the prime ideal $\sigma(\fq)$ divides $\cP_{\alpha}(\lambda_{\wp})$ for every $\sigma \in \Gal(\bK_{\wp}/\k)$, we deduce that $\sigma(\fq) \ne \lambda_{\wp}\cO_{\wp}$ for every $\sigma \in \Gal(\bK_{\wp}/\k)$. Since $\fq$ is a prime ideal, we know that $\Norm_{\bK_{\wp}/\k}(\fq) = (\fp \A)^r$ for some monic prime $\fp \in \A$ and some positive integer $r$. By \cite[Theorem 3.5.1]{Goldschmidt}, and since  $\wp\cO_{\wp} = (\lambda_{\wp}\cO_{\wp})^{q^{\deg(\wp)} - 1}$, we deduce that $\fp \ne \wp$. By Rosen \cite[Proposition 12.7]{Rosen}, $\bK_{\wp}$ is unramified at $\fp$, and thus $\fp\cO_{\wp}$ is the product of the distinct conjugates of $\fq$. Therefore $\fp\cO_{\wp}$ divides $\cP_{\alpha}(\lambda_{\wp})\cO_{\wp}$, and thus 
\begin{align}
\label{E1-in-L1}
\dfrac{\cP_{\alpha}(\lambda_{\wp})}{\fp} \in \cO_{\wp}.
\end{align}

By Remark \ref{R1-in-L1}, and since $\deg(\cP_{\alpha}(x)) \le q^{\deg(\wp)} - 2$, one can write
\begin{align}
\label{E2-in-L1}
\cP_{\alpha}(x) = \sum_{i = 0}^h \epsilon_i x^i,
\end{align}
where the $\epsilon_i$ are in $\A$ with $\epsilon_h \ne 0$, and $h = \deg(\cP_{\alpha}(x)) \le q^{\deg(\wp)} - 2$. By \cite[Proposition 12.9]{Rosen}, $\cO_{\wp} = \A[\lambda_{\wp}]$, and since the minimal polynomial of $\lambda_{\wp}$ over $\k$ is the $\wp$-th cyclotomic polynomial $\Psi_{\wp}(x) \in \A[x]$ of degree exactly $q^{\deg(\wp)} - 1$ (see Hayes \cite{Hayes} or Section \ref{S-basic-notions}), it follows from (\ref{E1-in-L1}) and (\ref{E2-in-L1}) that there exists an element of the form $\sum_{j = 0}^{q^{\deg(\wp)} - 2}\kappa_j \lambda_{\wp}^j \in \cO_{\wp} = \A[\lambda_{\wp}]$ with the $\kappa_j \in \A$ such that
\begin{align*}
\dfrac{\cP_{\alpha}(\lambda_{\wp})}{\fp} = \sum_{i = 0}^h \dfrac{\epsilon_i}{\fp} \lambda_{\wp}^i = \sum_{j = 0}^{q^{\deg(\wp)} - 2}\kappa_j \lambda_{\wp}^j.
\end{align*}

Therefore
\begin{align}
\label{E3-in-L1}
\sum_{i = 0}^h \left(\dfrac{\epsilon_i}{\fp} - \kappa_i\right) \lambda_{\wp}^i - \sum_{i = h + 1}^{q^{\deg(\wp)} - 2}\kappa_i\lambda_{\wp}^i = 0.
\end{align}
Since $h \le q^{\deg(\wp)} - 2$ and the minimal polynomial of $\lambda_{\wp}$ over $\k$ is the $\wp$-th cyclotomic polynomial $\Psi_{\wp}(x) \in \A[x]$ of degree exactly $q^{\deg(\wp)} - 1$, we deduce from (\ref{E3-in-L1}) that 
\begin{align*}
\dfrac{\epsilon_i}{\fp} - \kappa_i = 0
\end{align*}
for every $0 \le i \le h$. Thus $\epsilon_i = \fp \kappa_i$ for every $0 \le i \le h$. Therefore $\fp$ divides $\gcd_{0 \le i \le h}(\epsilon_i)$. This implies that the content of $\cP_{\alpha}(x)$ is divisible by $\fp$, which is a contradiction to (iii) in Lemma \ref{L1}. Thus $\alpha$ is a unit in $\cO_{\wp}$.

\end{proof}

For each $e \ge 1$, set
\begin{align}
\label{E-rho-e}
\rho_e = \dfrac{\sigma_{\fg}^{e - 1}(\lambda_{\wp})}{\sigma_{\fg}^{e}(\lambda_{\wp})} = \sigma_{\fg}^{e - 1}\left( \dfrac{\lambda_{\wp}}{\sigma_{\fg}(\lambda_{\wp})}\right).
\end{align}
Since $\dfrac{\lambda_{\wp}}{\sigma_{\fg}(\lambda_{\wp})} = \dfrac{\lambda_{\wp}}{C_{\fg}(\lambda_{\wp})}$ is a unit in $\cO_{\wp}$ (see Rosen \cite[Proposition 12.6]{Rosen}), it follows that $\rho_e$ is a unit in $\cO_{\wp}$. Recall from Subsection \ref{Subsection-Galois-group} that $\sigma_{\fg}^r(\lambda_{\wp}) = \sigma_{\fg^r}(\lambda_{\wp}) = C_{\fg^r}(\lambda_{\wp})$; hence one can write (\ref{E-rho-e}) in the form
\begin{align}
\label{E-rho-e-2}
\rho_e = \dfrac{C_{\fg^{e - 1}}(\lambda_{\wp})}{C_{\fg^{e}}(\lambda_{\wp})}.
\end{align}

Let $\cP_{\rho_e}(x) \in \A[x]$ be the polynomial representing $\rho_e$. From the above equation, one gets
\begin{align*}
\cP_{\rho_e}(C_{\fg}(\lambda_{\wp})) = \cP_{\rho_e}(\sigma_{\fg}(\lambda_{\wp})) = \sigma_{\fg}(\cP_{\rho_e}(\lambda_{\wp})) = \sigma_{\fg}(\rho_e) = \sigma_{\fg}\left( \dfrac{\sigma_{\fg}^{e - 1}(\lambda_{\wp})}{\sigma_{\fg}^{e}(\lambda_{\wp})} \right),
\end{align*}
and thus
\begin{align}
\label{Eqn-between-rho-e}
\cP_{\rho_e}(C_{\fg}(\lambda_{\wp})) = \dfrac{\sigma_{\fg}^{e}(\lambda_{\wp})}{\sigma_{\fg}^{e + 1}(\lambda_{\wp})} = \rho_{e + 1} = \cP_{\rho_{e + 1}}(\lambda_{\wp}).
\end{align}

\begin{lemma}
\label{L2}

Let $\ell$ be an integer $\ge 2$, and set
\begin{align*}
\Lambda_{\wp} = \prod_{h= 2}^{\ell}\prod_{e = 2}^h\dfrac{\rho_{e}}{\rho_{e -1}} = \prod_{h= 2}^{\ell}\prod_{e = 2}^h\dfrac{\cP_{\rho_{e - 1}}(C_{\fg}(\lambda_{\wp}))}{\cP_{\rho_{e - 1}}(\lambda_{\wp})}.
\end{align*}
Then
\begin{align}
\label{E-in-L2}
\dfrac{\lambda_{\wp}}{C_{\fg^{\ell}}(\lambda_{\wp})} = \Lambda_{\wp}\left(\dfrac{\lambda_{\wp}}{C_{\fg}(\lambda_{\wp})}\right)^{\ell}.
\end{align}

\end{lemma}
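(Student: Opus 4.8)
The plan is to recognize both sides of \eqref{E-in-L2} as products of the units $\rho_e$ and then reduce the identity to a telescoping computation. A preliminary remark is that equation \eqref{Eqn-between-rho-e} already shows the two displayed formulas for $\Lambda_{\wp}$ coincide, since it gives $\cP_{\rho_{e-1}}(C_{\fg}(\lambda_{\wp})) = \rho_e$ while $\cP_{\rho_{e-1}}(\lambda_{\wp}) = \rho_{e-1}$, so that $\cP_{\rho_{e-1}}(C_{\fg}(\lambda_{\wp}))/\cP_{\rho_{e-1}}(\lambda_{\wp}) = \rho_e/\rho_{e-1}$. Hence I may freely work with the cleaner form $\Lambda_{\wp} = \prod_{h=2}^{\ell}\prod_{e=2}^{h}\rho_e/\rho_{e-1}$.

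First I would rewrite the left-hand side of \eqref{E-in-L2}. Using $C_{\fg^{\ell}}(\lambda_{\wp}) = \sigma_{\fg}^{\ell}(\lambda_{\wp})$ from Proposition \ref{P1} together with the definition \eqref{E-rho-e} of $\rho_e$, the quotient telescopes:
\[
\dfrac{\lambda_{\wp}}{C_{\fg^{\ell}}(\lambda_{\wp})} = \dfrac{\lambda_{\wp}}{\sigma_{\fg}^{\ell}(\lambda_{\wp})} = \prod_{e=1}^{\ell}\dfrac{\sigma_{\fg}^{e-1}(\lambda_{\wp})}{\sigma_{\fg}^{e}(\lambda_{\wp})} = \prod_{e=1}^{\ell}\rho_e.
\]
Since $\rho_1 = \lambda_{\wp}/C_{\fg}(\lambda_{\wp})$ is exactly the base of the power appearing on the right-hand side of \eqref{E-in-L2}, dividing through by $\rho_1^{\ell}$ reduces the lemma to the purely formal identity $\prod_{e=2}^{\ell}(\rho_e/\rho_1) = \Lambda_{\wp}$.

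To establish this I would telescope each factor a second time, writing $\rho_e/\rho_1 = \prod_{j=2}^{e}\rho_j/\rho_{j-1}$, and then take the product over $2 \le e \le \ell$; after renaming the outer index $e \mapsto h$ and the inner index $j \mapsto e$ this is precisely $\prod_{h=2}^{\ell}\prod_{e=2}^{h}\rho_e/\rho_{e-1} = \Lambda_{\wp}$. An equally short alternative is induction on $\ell$: the base case $\ell = 2$ gives $\rho_1\rho_2 = (\rho_2/\rho_1)\rho_1^2$, and passing from $\ell$ to $\ell+1$ multiplies $\Lambda_{\wp}$ by $\prod_{e=2}^{\ell+1}\rho_e/\rho_{e-1} = \rho_{\ell+1}/\rho_1$, which matches the extra factor $\rho_{\ell+1}$ on the left after accounting for the change from $\rho_1^{\ell}$ to $\rho_1^{\ell+1}$.

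The argument involves no serious obstacle; the only points requiring care are the index bookkeeping---ensuring the telescoped inner products reindex exactly onto the $(h,e)$ ranges defining $\Lambda_{\wp}$---and confirming at the start that $\rho_1$ coincides with the base $\lambda_{\wp}/C_{\fg}(\lambda_{\wp})$ of the power in \eqref{E-in-L2}, so that the $\rho_1^{\ell}$ factors cancel cleanly and the identity collapses to the telescoping computation.
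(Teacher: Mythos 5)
Your proof is correct and follows essentially the same route as the paper: both arguments telescope the inner product to $\rho_h/\rho_1$, telescope again to identify $\prod_{e=1}^{\ell}\rho_e$ with $\lambda_{\wp}/C_{\fg^{\ell}}(\lambda_{\wp})$, and use $\rho_1 = \lambda_{\wp}/C_{\fg}(\lambda_{\wp})$ to extract the factor $\rho_1^{\ell}$. The only cosmetic difference is that the paper computes $\Lambda_{\wp}$ directly via the explicit form \eqref{E-rho-e-2} and then reads off \eqref{E-in-L2}, whereas you start from the left-hand side and reduce to the identity $\Lambda_{\wp} = \prod_{e=2}^{\ell}\rho_e/\rho_1$; the content is identical.
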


\begin{proof}

For each $h \ge 1$, we see that
\begin{align*}
\prod_{e = 2}^h\dfrac{\rho_{e}}{\rho_{e - 1}} = \dfrac{\rho_h}{\rho_{1}},
\end{align*}
and it thus follows from (\ref{E-rho-e-2}) that
\begin{align*}
\Lambda_{\wp} =  \prod_{h= 2}^{\ell}\dfrac{\rho_h}{\rho_{1}} = \dfrac{1}{\rho_1^{\ell - 1}} \prod_{h= 2}^{\ell}\dfrac{C_{\fg^{h - 1}}(\lambda_{\wp})}{C_{\fg^{h}}(\lambda_{\wp})} = \left(\dfrac{C_{\fg}(\lambda_{\wp})}{\lambda_{\wp}}\right)^{\ell - 1}\dfrac{C_{\fg}(\lambda_{\wp})}{C_{\fg^{\ell}}(\lambda_{\wp})}.
\end{align*}
Therefore (\ref{E-in-L2}) follows immediately.

\end{proof}

Let $\ell = q^{\deg(\wp)} - 1$. Note that $(\star)$ in Subsection \ref{Subsection-Galois-group} implies that $\ell \ge 2$. Since 
\begin{align*}
C_{\fg^{ q^{\deg(\wp)} - 1}}(\lambda_{\wp}) = \sigma_{\fg^{ q^{\deg(\wp)} - 1}}(\lambda_{\wp}) = \sigma_{\fg}^{q^{\deg(\wp)} - 1}(\lambda_{\wp}) = \lambda_{\wp},
\end{align*}
we deduce from Lemma \ref{L2} that
\begin{align}
\label{E-for-Lambda_wp-in-terms-of-lambda_wp}
\Lambda_{\wp} = \left(\dfrac{\lambda_{\wp}}{C_{\fg}(\lambda_{\wp})}\right)^{1 - q^{\deg(\wp)}}.
\end{align}

We summarize the above discussion in the following result.

\begin{corollary}
\label{C1}

Let $\ell$ be an integer. Then there exist an integer $e$ with $0 \le e \le q^{\deg(\wp)} - 2$ and an integer $h$ such that the following are true:
\begin{itemize}

\item [(i)] $\ell = (q^{\deg(\wp)} - 1)h + e$; and

\item [(ii)]
\begin{align*}
\left(\dfrac{\lambda_{\wp}}{C_{\fg}(\lambda_{\wp})}\right)^{\ell} = \left(\dfrac{\lambda_{\wp}}{C_{\fg}(\lambda_{\wp})}\right)^{e}\Lambda_{\wp}^{-h}.
\end{align*}

\end{itemize}

\end{corollary}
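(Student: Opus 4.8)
The plan is to read off both assertions directly from the Euclidean division algorithm in $\bZ$ combined with the closed form for $\Lambda_{\wp}$ already recorded in equation (\ref{E-for-Lambda_wp-in-terms-of-lambda_wp}). Since this is purely a bookkeeping consequence of identities established above, I do not anticipate any genuine obstacle; the entire content is the reduction of an arbitrary exponent modulo the order $q^{\deg(\wp)} - 1$ of the Galois group $\Gal(\bK_{\wp}/\k)$. The only step meriting real care is correctly tracking the sign of the exponent in (\ref{E-for-Lambda_wp-in-terms-of-lambda_wp}).

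For (i), I would simply divide $\ell$ by $q^{\deg(\wp)} - 1$. The division algorithm produces a (unique) quotient $h$ and remainder $e$ with $0 \le e \le q^{\deg(\wp)} - 2$ satisfying $\ell = (q^{\deg(\wp)} - 1)h + e$, which is precisely (i). For (ii), I would start from (\ref{E-for-Lambda_wp-in-terms-of-lambda_wp}), namely
\begin{align*}
\Lambda_{\wp} = \left(\dfrac{\lambda_{\wp}}{C_{\fg}(\lambda_{\wp})}\right)^{1 - q^{\deg(\wp)}},
\end{align*}
and raise both sides to the power $-h$ to obtain $\Lambda_{\wp}^{-h} = \left(\dfrac{\lambda_{\wp}}{C_{\fg}(\lambda_{\wp})}\right)^{(q^{\deg(\wp)} - 1)h}$. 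This manipulation is legitimate because $\dfrac{\lambda_{\wp}}{C_{\fg}(\lambda_{\wp})}$ is a unit in $\cO_{\wp}$ (see Rosen \cite[Proposition 12.6]{Rosen}), so arbitrary integer powers are defined and the usual exponent laws hold in the abelian group $\cO_{\wp}^{\times}$.

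Multiplying by $\left(\dfrac{\lambda_{\wp}}{C_{\fg}(\lambda_{\wp})}\right)^{e}$ and then invoking (i) to rewrite $(q^{\deg(\wp)} - 1)h + e$ as $\ell$, I would conclude
\begin{align*}
\left(\dfrac{\lambda_{\wp}}{C_{\fg}(\lambda_{\wp})}\right)^{e}\Lambda_{\wp}^{-h} = \left(\dfrac{\lambda_{\wp}}{C_{\fg}(\lambda_{\wp})}\right)^{(q^{\deg(\wp)} - 1)h + e} = \left(\dfrac{\lambda_{\wp}}{C_{\fg}(\lambda_{\wp})}\right)^{\ell},
\end{align*}
which is exactly (ii). The one point I would flag explicitly is that, because $\Lambda_{\wp}$ carries the exponent $1 - q^{\deg(\wp)} = -(q^{\deg(\wp)} - 1)$ in (\ref{E-for-Lambda_wp-in-terms-of-lambda_wp}), it is $\Lambda_{\wp}^{-h}$ rather than $\Lambda_{\wp}^{h}$ that contributes the positive exponent $(q^{\deg(\wp)} - 1)h$, so that the exponents on the right-hand side add up to $\ell$ as required.
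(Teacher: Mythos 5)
Your proposal is correct and is essentially the argument the paper intends: the corollary is stated there as a summary of the preceding discussion, which consists precisely of the Euclidean division of $\ell$ by $q^{\deg(\wp)} - 1$ together with the identity $\Lambda_{\wp} = \bigl(\lambda_{\wp}/C_{\fg}(\lambda_{\wp})\bigr)^{1 - q^{\deg(\wp)}}$ from (\ref{E-for-Lambda_wp-in-terms-of-lambda_wp}), and your sign bookkeeping for the exponent $-h$ is exactly right. No discrepancy with the paper's route.
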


From the definition of $\Lambda_{\wp}$ in Lemma \ref{L2}, $\Lambda_{\wp}^{-1}$ clearly belongs to the multiplicative group $\cG_{\wp}$, and so does $\Lambda_{\wp}^h$ for any integer $h$. (Recall that $\cG_{\wp}$ is defined in Subsection \ref{Subsection-G-wp}.) From Proposition \ref{P4} and Corollary \ref{C1}, we obtain the following result.

\begin{corollary}
\label{C2}

Let $\ell$ be an integer. Then there exist an integer $e$ with $0 \le e \le q^{\deg(\wp)} - 2$ and an element $\dfrac{\cQ(\lambda_{\wp})}{\cQ(C_{\fg}(\lambda_{\wp}))} \in \cG_{\wp}$ such that
\begin{align*}
\left(\dfrac{\lambda_{\wp}}{C_{\fg}(\lambda_{\wp})}\right)^{\ell} = \left(\dfrac{\lambda_{\wp}}{C_{\fg}(\lambda_{\wp})}\right)^{e}\dfrac{\cQ(\lambda_{\wp})}{\cQ(C_{\fg}(\lambda_{\wp}))}.
\end{align*}

\end{corollary}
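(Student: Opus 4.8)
The plan is to combine Corollary \ref{C1} with the group structure of $\cG_{\wp}$; the statement is really just a repackaging of the former, once one observes that the factor $\Lambda_{\wp}^{-h}$ appearing there already lies in $\cG_{\wp}$ and can therefore be displayed in the canonical quotient form $\cQ(\lambda_{\wp})/\cQ(C_{\fg}(\lambda_{\wp}))$.

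First I would apply Corollary \ref{C1} to the given integer $\ell$, obtaining an integer $e$ with $0 \le e \le q^{\deg(\wp)} - 2$ and an integer $h$ for which
\[
\left(\dfrac{\lambda_{\wp}}{C_{\fg}(\lambda_{\wp})}\right)^{\ell} = \left(\dfrac{\lambda_{\wp}}{C_{\fg}(\lambda_{\wp})}\right)^{e}\Lambda_{\wp}^{-h}.
\]
Note that the range constraint on $e$ is already supplied by Corollary \ref{C1} (it is the remainder in the division $\ell = (q^{\deg(\wp)} - 1)h + e$), so no extra bookkeeping is needed at this step.

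Next I would check that $\Lambda_{\wp}^{-h} \in \cG_{\wp}$. From the factored description of $\Lambda_{\wp}$ in Lemma \ref{L2}, each factor is the inverse of an expression of the form $\cP_{\rho_{e - 1}}(\lambda_{\wp})/\cP_{\rho_{e - 1}}(C_{\fg}(\lambda_{\wp}))$, which belongs to $\cG_{\wp}$ because $\rho_{e - 1}$ is a unit in $\cO_{\wp}$; since $\cG_{\wp}$ is a multiplicative group, $\Lambda_{\wp}$ and all of its integer powers, in particular $\Lambda_{\wp}^{-h}$, lie in $\cG_{\wp}$. This is exactly the assertion recorded immediately before the statement. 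Finally, by the definition of $\cG_{\wp}$ (equivalently Proposition \ref{P4}), membership $\Lambda_{\wp}^{-h} \in \cG_{\wp}$ means there is a polynomial $\cQ(x) \in \A[x]$ with $\cQ(\lambda_{\wp})$ a unit in $\cO_{\wp}$ such that $\Lambda_{\wp}^{-h} = \cQ(\lambda_{\wp})/\cQ(C_{\fg}(\lambda_{\wp}))$; substituting this into the displayed identity yields the asserted factorization.

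I do not expect any genuine obstacle here: the substantive work lies entirely in Corollary \ref{C1} — where the relation (\ref{E-for-Lambda_wp-in-terms-of-lambda_wp}) reducing $\Lambda_{\wp}$ to a power of $\lambda_{\wp}/C_{\fg}(\lambda_{\wp})$ does the real lifting — and in the earlier verification that $\cG_{\wp}$ is a group. The only mild point of care is to keep the two descriptions of $\cG_{\wp}$ (as quotients $\cQ(\lambda_{\wp})/\cQ(C_{\fg}(\lambda_{\wp}))$ and as $\alpha/\sigma_{\fg}(\alpha)$) aligned, but Proposition \ref{P4} already reconciles them, so the deduction is immediate.
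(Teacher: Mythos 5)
Your proposal is correct and follows exactly the route the paper takes: Corollary \ref{C1} supplies the decomposition with the factor $\Lambda_{\wp}^{-h}$, the factored form of $\Lambda_{\wp}$ in Lemma \ref{L2} together with the group structure of $\cG_{\wp}$ shows $\Lambda_{\wp}^{-h} \in \cG_{\wp}$, and Proposition \ref{P4} converts that membership into the quotient form $\cQ(\lambda_{\wp})/\cQ(C_{\fg}(\lambda_{\wp}))$. This matches the paper's own (implicit) derivation of Corollary \ref{C2} in the paragraph preceding its statement.
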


\begin{lemma}
\label{L3}

Let $\cQ(x)$ be a polynomial in $\A[x]$ such that $\cQ(\lambda_{\wp})$ is a unit in $\cO_{\wp}$. Then $\gcd(\cQ(0), \wp) = 1$.

\end{lemma}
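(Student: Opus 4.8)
The plan is to argue by contradiction, exploiting the single prime of $\cO_{\wp}$ lying above $\wp$. Since $\wp$ is prime in $\A$, the condition $\gcd(\cQ(0), \wp) \neq 1$ is equivalent to $\wp \mid \cQ(0)$ in $\A$. So I would assume $\wp \mid \cQ(0)$ and derive a contradiction with the hypothesis that $\cQ(\lambda_{\wp})$ is a unit.

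The central observation is that $\lambda_{\wp}$ divides $\wp$ in $\cO_{\wp}$. Indeed, by Rosen \cite[Proposition 12.7]{Rosen} the ideal $\lambda_{\wp}\cO_{\wp}$ is prime and $\wp\cO_{\wp} = (\lambda_{\wp}\cO_{\wp})^{q^{\deg(\wp)} - 1}$; since $\deg(\wp) \ge 1$ forces $q^{\deg(\wp)} - 1 \ge 1$, this gives $\wp \in \wp\cO_{\wp} \subseteq \lambda_{\wp}\cO_{\wp}$. Writing $\cQ(x) = \sum_i c_i x^i$ with $c_i \in \A$, I would next note the congruence
\begin{align*}
\cQ(\lambda_{\wp}) - \cQ(0) = \sum_{i \ge 1} c_i \lambda_{\wp}^i \in \lambda_{\wp}\cO_{\wp},
\end{align*}
so that $\cQ(\lambda_{\wp}) \equiv \cQ(0) \pmod{\lambda_{\wp}\cO_{\wp}}$. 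This is exactly the reduction step already used inside the proof of Lemma \ref{L1}.

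Combining these, the assumption $\wp \mid \cQ(0)$ yields $\cQ(0) \in \wp\cO_{\wp} \subseteq \lambda_{\wp}\cO_{\wp}$, and hence $\cQ(\lambda_{\wp}) \in \lambda_{\wp}\cO_{\wp}$. But $\lambda_{\wp}\cO_{\wp}$ is a proper (prime) ideal of $\cO_{\wp}$, so it contains no unit. This contradicts the hypothesis that $\cQ(\lambda_{\wp})$ is a unit in $\cO_{\wp}$, and therefore $\wp \nmid \cQ(0)$, i.e., $\gcd(\cQ(0), \wp) = 1$.

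I do not expect a genuine obstacle here: the entire content is the divisibility $\lambda_{\wp} \mid \wp$ in $\cO_{\wp}$, which is precisely the total-ramification statement of Rosen \cite[Proposition 12.7]{Rosen}. The only point requiring a little care is keeping the roles of $\A$ and $\cO_{\wp}$ straight, namely translating $\wp \mid \cQ(0)$ in $\A$ into membership in the ideal $\wp\cO_{\wp}$ before invoking the ramification relation; once that is done, the contradiction with being a unit is immediate.
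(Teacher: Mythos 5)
Your proof is correct and follows essentially the same route as the paper: assume $\wp \mid \cQ(0)$, use the total ramification $\wp\cO_{\wp} = (\lambda_{\wp}\cO_{\wp})^{q^{\deg(\wp)}-1}$ to get $\lambda_{\wp} \mid \cQ(0)$, then conclude $\cQ(\lambda_{\wp}) \equiv \cQ(0) \equiv 0 \pmod{\lambda_{\wp}}$, contradicting that a unit cannot lie in the proper prime ideal $\lambda_{\wp}\cO_{\wp}$. The extra care you take in translating divisibility in $\A$ to ideal membership in $\cO_{\wp}$ is a welcome clarification but does not change the argument.
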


\begin{proof}

Let $\epsilon = \cQ(\lambda_{\wp})$. By assumption, $\epsilon$ is a unit in $\cO_{\wp}$. Assume the contrary, i.e., $\wp$ divides $\cQ(0)$.  It is known (see Rosen \cite[Proposition 12.7]{Rosen}) that $\wp\cO_{\wp} = (\lambda_{\wp}\cO_{\wp})^{q^{\deg(\wp)} - 1}$. Thus $\lambda_{\wp}$ divides $\cQ(0)$, and hence
\begin{align*}
\epsilon = \cQ(\lambda_{\wp}) \equiv \cQ(0) \equiv 0 \pmod{\lambda_{\wp}},
\end{align*}
which is a contradiction since $\epsilon$ is a unit in $\cO_{\wp}$ and $\lambda_{\wp}\cO_{\wp}$ is a prime ideal in $\cO_{\wp}$ (see Rosen \cite[Proposition 12.7]{Rosen}).

\end{proof}

The next result is our main theorem in this paper, which can be viewed as a function field analogue of Newman \cite[Theorem]{Newman}. The proof of the next theorem follows the same ideas as in the proof of Newman \cite[Theorem]{Newman}, but we need some modifications to adapt the proof of Newman \cite[Theorem]{Newman} into the function field setting.

\begin{theorem}
\label{MT}

Let $\epsilon$ be a unit in $\cO_{\wp}$ of norm $1$. Then there exist an integer $\ell$ with $0 \le \ell \le q^{\deg(\wp)} - 2$, and a polynomial $\cQ(x) \in \A[x]$ of degree at most $q^{\deg(\wp)} - 2$ with $\cQ(\lambda_{\wp})$ being a unit in $\cO_{\wp}$ such that the following are true:
\begin{itemize}

\item [(R1)] $\epsilon$ can be represented in the form
\begin{align*}
\epsilon = \left(\dfrac{\lambda_{\wp}}{C_{\fg}(\lambda_{\wp})}\right)^{\ell}\left(\dfrac{\cQ(\lambda_{\wp})}{\cQ(C_{\fg}(\lambda_{\wp}))}\right).
\end{align*}

\item [(R2)] The representation of $\epsilon$ in (R1) is unique, except that $\cQ(x)$ can be replaced by $\upsilon\cQ(x)$ for some unit $\upsilon \in \bF_q^{\times}$; more precisely, if there exist an integer $\ell_{\star}$ with $0 \le \ell_{\star} \le q^{\deg(\wp)} - 2$, and a  polynomial $\cQ_{\star}(x) \in \A[x]$ of degree at most $q^{\deg(\wp)} - 2$ with $\cQ_{\star}(\lambda_{\wp})$ being a unit in $\cO_{\wp}$ such that 
\begin{align*}
\epsilon = \left(\dfrac{\lambda_{\wp}}{C_{\fg}(\lambda_{\wp})}\right)^{\ell_{\star}}\left(\dfrac{\cQ_{\star}(\lambda_{\wp})}{\cQ_{\star}(C_{\fg}(\lambda_{\wp}))}\right),
\end{align*}
then $\ell_{\star} = \ell$ and $\cQ_{\star}(x) = \upsilon \cQ(x)$ for some unit $\upsilon \in \bF_q^{\times}$.

\end{itemize}

\end{theorem}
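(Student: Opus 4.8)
For existence, the plan is to start from Hilbert's Satz 90 and extract the correct power of the base unit. By Proposition \ref{P-Hilbert-Satz-90-for-units} I would write $\epsilon = \delta_0/\sigma_\fg(\delta_0)$ for some $\delta_0 \in \cO_\wp$, and let $\cP$ be its representing polynomial (Proposition \ref{P2}). Since $\sigma_\fg$ fixes $\k$, replacing $\delta_0$ by $\delta_0$ divided by the content of $\cP$ changes neither the ratio nor the degree bound, so I may assume $\cP$ is primitive. I then set $a = v_{\lambda_\wp}(\delta_0)$, the valuation at the unique prime above $\wp$ (recall $\wp\cO_\wp = (\lambda_\wp\cO_\wp)^{q^{\deg(\wp)}-1}$), and put $\delta = \delta_0\lambda_\wp^{-a}$. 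Because $\lambda_\wp$ is a unit at every other prime, $\delta \in \cO_\wp$ and $v_{\lambda_\wp}(\delta)=0$, and a short computation gives
\[
\epsilon = \frac{\delta}{\sigma_\fg(\delta)}\left(\frac{\lambda_\wp}{C_\fg(\lambda_\wp)}\right)^{a}.
\]
Thus existence reduces to showing that $\delta$ is a \emph{unit}: once that is known, $\delta/\sigma_\fg(\delta)\in\cG_\wp$, I collapse the exponent $a$ to an admissible $0\le\ell\le q^{\deg(\wp)}-2$ by Corollary \ref{C2}, absorb the extra factor using that $\cG_\wp$ is a group (Proposition \ref{P4}), and represent the resulting $\cG_\wp$-element as in Proposition \ref{P4}(iii) to obtain a $\cQ$ with $\cQ(\lambda_\wp)$ a unit and $\deg\cQ\le q^{\deg(\wp)}-2$.

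To prove $\delta$ is a unit I would apply Lemma \ref{L1} to the representing polynomial $\cR$ of $\delta$ and verify its three hypotheses. Hypothesis (i) is immediate, since $\cR(\lambda_\wp)/\cR(C_\fg(\lambda_\wp)) = \delta/\sigma_\fg(\delta) = \epsilon\,(\lambda_\wp/C_\fg(\lambda_\wp))^{-a}$ is a product of units. Hypothesis (ii), $\gcd(\cR(0),\wp)=1$, is equivalent to $\lambda_\wp \nmid \delta$, i.e. to $v_{\lambda_\wp}(\delta)=0$, which holds by construction (using that a constant of $\A$ lies in $\lambda_\wp\cO_\wp$ precisely when it is divisible by $\wp$). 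The substance is hypothesis (iii), primitivity of $\cR$: since $\cO_\wp$ is free over $\A$ on $\{\lambda_\wp^i\}$, a prime $\fp$ of $\A$ divides all coefficients of $\cR$ only if $\fp$ divides $\delta=\delta_0\lambda_\wp^{-a}$, hence divides $\delta_0$, hence divides the content of $\cP$, contradicting primitivity of $\cP$. I expect this transfer of primitivity from $\cP$ to $\cR$ — equivalently, the statement that the Galois-stable ideal $(\delta_0)$ has no rational prime factors and is therefore a pure power of $(\lambda_\wp)$ — to be the main technical obstacle.

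For uniqueness, suppose $\epsilon$ has two representations with data $(\ell,\cQ)$ and $(\ell_\star,\cQ_\star)$. The key device is reduction modulo $\lambda_\wp$, that is, the homomorphism $\cO_\wp^\times \to \bF_\wp^\times$ where $\bF_\wp = \cO_\wp/\lambda_\wp\cO_\wp \cong \A/\wp\A$. From the expansion $C_\fg(x) = \fg x + (\text{higher } q\text{-power terms})$ one gets $\lambda_\wp/C_\fg(\lambda_\wp) \equiv \fg^{-1} \pmod{\lambda_\wp}$, whose class generates $\bF_\wp^\times$ because $\fg$ is a primitive root. On the other hand every element $\cQ(\lambda_\wp)/\cQ(C_\fg(\lambda_\wp))$ of $\cG_\wp$ reduces to $1$, since both $\cQ(\lambda_\wp)$ and $\cQ(C_\fg(\lambda_\wp))$ are congruent to $\cQ(0)\pmod{\lambda_\wp}$, and $\cQ(0)$ is a nonzero residue by Lemma \ref{L3}. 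Reducing the identity $\epsilon = (\lambda_\wp/C_\fg(\lambda_\wp))^{\ell}\cdot(\cG_\wp\text{-factor})$ yields $\bar\epsilon = \bar\fg^{-\ell}$, and likewise $\bar\epsilon = \bar\fg^{-\ell_\star}$; as $\bF_\wp^\times$ has order $q^{\deg(\wp)}-1$ and $0\le\ell,\ell_\star\le q^{\deg(\wp)}-2$, this forces $\ell=\ell_\star$.

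Finally, cancelling the equal powers of $\lambda_\wp/C_\fg(\lambda_\wp)$ leaves $\cQ(\lambda_\wp)/\cQ(C_\fg(\lambda_\wp)) = \cQ_\star(\lambda_\wp)/\cQ_\star(C_\fg(\lambda_\wp))$. Cross-multiplying and using that $\sigma_\fg$ generates $\Gal(\bK_\wp/\k)$ shows $\cQ(\lambda_\wp)/\cQ_\star(\lambda_\wp)$ is $\sigma_\fg$-invariant, hence lies in $\k^\times$; being a quotient of units of $\cO_\wp$, it lies in $\cO_\wp^\times\cap\k = \bF_q^\times$. Thus $\cQ(\lambda_\wp) = \upsilon^{-1}\cQ_\star(\lambda_\wp)$ for some $\upsilon\in\bF_q^\times$, and since both polynomials have degree $\le q^{\deg(\wp)}-2$ the uniqueness clause of Proposition \ref{P2} upgrades this to the polynomial identity $\cQ_\star(x)=\upsilon\,\cQ(x)$, completing the argument.
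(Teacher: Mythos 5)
Your proposal is correct and follows essentially the same route as the paper: Hilbert's Satz 90, extraction of the exact power of $\lambda_{\wp}$ and of the content, Lemma \ref{L1} to certify that the remaining factor is a unit, Corollary \ref{C2} to normalize the exponent into $[0,q^{\deg(\wp)}-2]$, and reduction modulo $\lambda_{\wp}$ for uniqueness. The only difference is that you apply Satz 90 directly to $\epsilon$, whereas the paper first twists $\epsilon$ by $\left(C_{\fg}(\lambda_{\wp})/\lambda_{\wp}\right)^{h-1}$ with $\fg^{h}\equiv\cP_{\epsilon}(0)\pmod{\wp}$ --- a step inherited from Newman whose conclusion \eqref{E8-MT} is never actually invoked later --- so your streamlining is harmless.
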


\begin{proof}

Let $\cP_{\epsilon}(x) \in \A[x]$ be the polynomial representing $\epsilon$, i.e., $\epsilon = \cP_{\epsilon}(\lambda_{\wp})$. By Lemma \ref{L3}, 
\begin{align}
\label{E1-MT}
\gcd(\cP_{\epsilon}(0), \wp) = 1.
\end{align}

By (\ref{E1-MT}) and since $\fg$ is a generator of the cyclic group $(\A/\wp\A)^{\times}$, and $\#(\A/\wp\A)^{\times} = q^{\deg(\wp)} - 1$, there exists an integer $h$ with $0 \le h \le q^{\deg(\wp)} - 2$ such that 
\begin{align}
\label{E2-MT}
\cP_{\epsilon}(0) \equiv \fg^h \pmod{\wp}.
\end{align}

Set
\begin{align}
\label{E3-MT}
\epsilon = \left(\dfrac{C_{\fg}(\lambda_{\wp})}{\lambda_{\wp}}\right)^{h - 1}\alpha
\end{align}
for some $\alpha \in \bK_{\wp}$. From Rosen \cite[Proposition 12.6]{Rosen}, we know that $\dfrac{C_{\fg}(\lambda_{\wp})}{\lambda_{\wp}}$ is a unit in $\cO_{\wp}$, and thus $\alpha$ is a unit in $\cO_{\wp}$. 

Let $\cP_{\alpha}(x) \in \A[x]$ be the polynomial representing $\alpha$, and set
\begin{align}
\label{E4-MT}
\cQ(x) = \left(\dfrac{C_{\fg}(x)}{x}\right)^{q^{\deg(\wp)} + h - 2}\cP_{\alpha}(x).
\end{align}
From Rosen \cite[Proposition 12.11]{Rosen}, one can write $C_{\fg}(x) \in \A[x]$ in the form
\begin{align*}
C_{\fg}(x) = \fg x + [\fg, 1]x^q + \ldots + [\fg, \deg(\fg) - 1]x^{q^{\deg(\fg) - 1}} + [\fg, \deg(\fg)]x^{q^{\deg(\fg)}},
\end{align*}
where $[\fg, i] \in \A$ for each $1 \le i \le \deg(\fg)$. Thus
\begin{align}
\label{E5-MT}
\dfrac{C_{\fg}(x)}{x} = \fg  + [\fg, 1]x^{q - 1} + \ldots + [\fg, \deg(\fg)]x^{q^{\deg(\fg)} - 1} \in \A[x].
\end{align}
Since $q^{\deg(\wp)} + h - 2 > 0$, we see that $\cQ(x) \in \A[x]$.

We see from (\ref{E3-MT}) that 
\begin{align*}
\cQ(\lambda_{\wp}) &= \left(\dfrac{C_{\fg}(\lambda_{\wp})}{\lambda_{\wp}}\right)^{q^{\deg(\wp)} + h - 2}\cP_{\alpha}(\lambda_{\wp}) \\
&= \left(\dfrac{C_{\fg}(\lambda_{\wp})}{\lambda_{\wp}}\right)^{q^{\deg(\wp)} -1}\epsilon,
\end{align*}
and thus
\begin{align*}
\cQ(\lambda_{\wp}) - \left(\dfrac{C_{\fg}(\lambda_{\wp})}{\lambda_{\wp}}\right)^{q^{\deg(\wp)} -1}\cP_{\epsilon}(\lambda_{\wp}) = \cQ(\lambda_{\wp}) - \left(\dfrac{C_{\fg}(\lambda_{\wp})}{\lambda_{\wp}}\right)^{q^{\deg(\wp)} -1}\epsilon =0.
\end{align*}
Since $\cQ(x) - \left(\dfrac{C_{\fg}(x)}{x}\right)^{q^{\deg(\wp)} -1}\cP_{\epsilon}(x) \in \A[x]$, and the $\wp$-th cyclotomic polynomial $\Psi_{\wp}(x)$ is the minimal polynomial of $\lambda_{\wp}$, it follows from the above equation that $\Psi_{\wp}(x)$ divides $\cQ(x) - \left(\dfrac{C_{\fg}(x)}{x}\right)^{q^{\deg(\wp)} -1}\cP_{\epsilon}(x)$ in $\A[x]$. Thus there exists a polynomial, say $\cT(x) \in \A[x]$ such that
\begin{align}
\label{E6-MT}
\cQ(x) - \left(\dfrac{C_{\fg}(x)}{x}\right)^{q^{\deg(\wp)} -1}\cP_{\epsilon}(x) = \cT(x)\Psi_{\wp}(x).
\end{align}

Setting $\cZ(x) = \dfrac{C_{\fg}(x)}{x} \in \A[x]$, we see from (\ref{E5-MT}) that
\begin{align}
\label{E7-MT}
\cZ(0) = \fg.
\end{align}

From Proposition \ref{P-Psi(0)=wp}, and (\ref{E6-MT}), we deduce that
\begin{align*}
\cQ(0) - \cZ(0)^{q^{\deg(\wp)} -1}\cP_{\epsilon}(0) = \cT(0)\Psi_{\wp}(0) = \cT(0)\wp \equiv 0 \pmod{\wp}.
\end{align*}
From (\ref{E2-MT}), \eqref{E4-MT}, \eqref{E7-MT}, and the above equation, we deduce that
\begin{align*}
 \fg^{q^{\deg(\wp)} + h - 2}\cP_{\alpha}(0) = \cZ(0)^{q^{\deg(\wp)} + h - 2}\cP_{\alpha}(0) = \cQ(0) \equiv \cZ(0)^{q^{\deg(\wp)} -1}\cP_{\epsilon}(0) \equiv \fg^{q^{\deg(\wp)} + h -1} \pmod{\wp},
\end{align*}
and thus
\begin{align}
\label{E8-MT}
\cP_{\alpha}(0) \equiv \fg \pmod{\wp}.
\end{align}

We know that $\sigma_{\fg}(\lambda_{\wp}) = C_{\fg}(\lambda_{\wp})$, and thus $\dfrac{C_{\fg}(\lambda_{\wp})}{\lambda_{\wp}} = \dfrac{\sigma_{\fg}(\lambda_{\wp})}{\lambda_{\wp}}$. Since $\sigma_{\fg}$ is a generator of the Galois group $\Gal(\bK_{\wp}/\k)$, it follows that 
\begin{align*}
\Nm_{\bK_{\wp}/\k}\left(\dfrac{C_{\fg}(\lambda_{\wp})}{\lambda_{\wp}} \right) = \Nm_{\bK_{\wp}/\k}\left( \dfrac{\sigma_{\fg}(\lambda_{\wp})}{\lambda_{\wp}}\right) = 1.
\end{align*}
Since $\epsilon$ is of norm $1$, equation \eqref{E3-MT} and the above equation imply that $\alpha$ is also of norm $1$. Applying Proposition \ref{P-Hilbert-Satz-90-for-units}, there exists an element $\delta \in \cO_{\wp}$ such that
\begin{align}
\label{E9-MT}
\alpha = \dfrac{\delta}{\sigma_{\fg}(\delta)}.
\end{align}

Let $\cP_{\delta}(x) \in \A[x]$ be the polynomial representing $\delta$. Since $\delta = \cP_{\delta}(\lambda_{\wp})$, one can write
\begin{align}
\label{E9-1/2-MT}
\cP_{\delta}(\lambda_{\wp}) = \delta = \lambda_{\wp}^e\eta,
\end{align}
where $\eta \in \cO_{\wp}$ such that $\gcd(\lambda_{\wp}, \eta) = 1$, and $e$ is a nonnegative integer. 

Let $\cP_{\eta}(x) \in \A[x]$ be the polynomial representing $\eta$. Since 
\begin{align*}
\eta = \cP_{\eta}(\lambda_{\wp}) \equiv \cP_{\eta}(0) \pmod{\lambda_{\wp}}, 
\end{align*}
and $\wp\cO_{\wp} = (\lambda_{\wp}\cO_{\wp})^{q^{\deg(\wp)}- 1}$, we see that if $\wp$ divides $\cP_{\eta}(0)$, then the prime ideal $\lambda_{\wp}\cO_{\wp}$ divides $\eta$, which is a contradiction. Hence
\begin{align}
\label{E10-MT}
\gcd(\cP_{\eta}(0), \wp) = 1.
\end{align}

Note that the polynomial $\cP_{\eta}(x)$ is not identical to zero since $\eta \ne 0$. Let $\fc(\cP_{\eta}) \in \A$ be the content of the polynomial $\cP_{\eta}(x)$, i.e., the greatest common divisor of all nonzero coefficients of $\cP_{\eta}(x)$ in $\A$. Obviously $\fc(\cP_{\eta}) \ne 0$. Set 
\begin{align}
\label{E11-MT}
\cR(x) = \dfrac{\cP_{\eta}(x)}{\fc(\cP_{\eta})} \in \A[x],
\end{align}
and let 
\begin{align}
\label{E12-MT}
\beta = \cR(\lambda_{\wp}) =  \dfrac{\cP_{\eta}(\lambda_{\wp})}{\fc(\cP_{\eta})} = \dfrac{\eta}{\fc(\cP_{\eta})} \in \cO_{\wp} = \A[\lambda_{\wp}].
\end{align}
Let $\cP_{\beta}(x) \in \A[x]$ be the polynomial representing $\beta$. Since $\deg(\cR(x)) = \deg(\cP_{\eta}(x)) \le q^{\deg(\wp)} - 2$, the uniqueness implies that $\cP_{\beta}(x) = \cR(x)$ (see Proposition \ref{P2}). It follows from (\ref{E11-MT}) that
\begin{align}
\label{E13-MT}
\cP_{\beta}(x) = \dfrac{\cP_{\eta}(x)}{\fc(\cP_{\eta})}.
\end{align}

From (\ref{E9-MT}) and (\ref{E9-1/2-MT}), and since $\fc(\cP_{\eta}) \in \A$, one sees that
\begin{align*}
\alpha = \dfrac{\lambda_{\wp}^e\cP_{\eta}(\lambda_{\wp})}{\sigma_{\fg}(\lambda_{\wp}^e\cP_{\eta}(\lambda_{\wp}))} = \dfrac{\lambda_{\wp}^e\cP_{\eta}(\lambda_{\wp})}{\sigma_{\fg}(\lambda_{\wp}^e)\sigma_{\fg}(\cP_{\eta}(\lambda_{\wp}))} = \left(\dfrac{\lambda_{\wp}}{\sigma_{\fg}(\lambda_{\wp})}\right)^e\dfrac{\fc(\cP_{\eta})\cP_{\beta}(\lambda_{\wp})}{\sigma_{\fg}(\fc(\cP_{\eta})\cP_{\beta}(\lambda_{\wp}))} = \left(\dfrac{\lambda_{\wp}}{C_{\fg}(\lambda_{\wp})}\right)^e\dfrac{\cP_{\beta}(\lambda_{\wp})}{\sigma_{\fg}(\cP_{\beta}(\lambda_{\wp}))},
\end{align*}
and thus
\begin{align*}
\alpha  =  \left(\dfrac{\lambda_{\wp}}{C_{\fg}(\lambda_{\wp})}\right)^e \dfrac{\cP_{\beta}(\lambda_{\wp})}{\cP_{\beta}(\sigma_{\fg}(\lambda_{\wp}))} =  \left(\dfrac{\lambda_{\wp}}{C_{\fg}(\lambda_{\wp})}\right)^e\dfrac{\cP_{\beta}(\lambda_{\wp})}{\cP_{\beta}(C_{\fg}(\lambda_{\wp}))}.
\end{align*}
It therefore follows from (\ref{E3-MT}) that
\begin{align}
\label{E14-MT}
\epsilon =  \left(\dfrac{\lambda_{\wp}}{C_{\fg}(\lambda_{\wp})}\right)^{e - h + 1}\dfrac{\cP_{\beta}(\lambda_{\wp})}{\cP_{\beta}(C_{\fg}(\lambda_{\wp}))}. 
\end{align}

By Rosen \cite[Proposition 12.6]{Rosen}, one knows that $\dfrac{\lambda_{\wp}}{C_{\fg}(\lambda_{\wp})}$ is a unit in $\cO_{\wp}$, and thus 
\begin{itemize}

\item[(i)] $\dfrac{\cP_{\beta}(\lambda_{\wp})}{\cP_{\beta}(C_{\fg}(\lambda_{\wp}))}$ is a unit in $\cO_{\wp}$, or equivalently $\dfrac{\cP_{\beta}(C_{\fg}(\lambda_{\wp}))}{\cP_{\beta}(\lambda_{\wp})}$ is a unit in $\cO_{\wp}$.

\end{itemize}

From (\ref{E10-MT}), (\ref{E13-MT}), and since $\fc(\cP_{\eta})$ is the content of $\cP_{\eta}(x)$, we deduce that

\begin{itemize}

\item [(ii)] $\gcd(\cP_{\beta}(0), \wp) = 1$.

\item [(iii)] the content of the polynomial $\cP_{\beta}(x)$ is 1.

\end{itemize}

Since $\cP_{\beta}(x)$ satisfies all the conditions in Lemma \ref{L1}, we deduce that $\beta = \cP_{\beta}(\lambda_{\wp})$ is a unit in $\cO_{\wp}$, and thus $\dfrac{\cP_{\beta}(\lambda_{\wp})}{\cP_{\beta}(C_{\fg}(\lambda_{\wp}))}$ belongs to the group $\cG_{\wp}$. (Recall that $\cG_{\wp}$ is defined in Subsection \ref{Subsection-G-wp}.) By Corollary \ref{C2}, there exist an integer $\ell$ with $0 \le \ell \le q^{\deg(\wp)} - 2$, and an element $\dfrac{\cP(\lambda_{\wp})}{\cP(C_{\fg}(\lambda_{\wp}))} \in \cG_{\wp}$ such that
\begin{align*}
\left(\dfrac{\lambda_{\wp}}{C_{\fg}(\lambda_{\wp})}\right)^{e - h + 1} = \left(\dfrac{\lambda_{\wp}}{C_{\fg}(\lambda_{\wp})}\right)^{\ell}\dfrac{\cP(\lambda_{\wp})}{\cP(C_{\fg}(\lambda_{\wp}))},
\end{align*}
and it thus follows from (\ref{E14-MT}) that
\begin{align}
\label{E16-MT}
\epsilon = \left(\dfrac{\lambda_{\wp}}{C_{\fg}(\lambda_{\wp})}\right)^{\ell} \left(\dfrac{\cP(\lambda_{\wp})}{\cP(C_{\fg}(\lambda_{\wp}))}\dfrac{\cP_{\beta}(\lambda_{\wp})}{\cP_{\beta}(C_{\fg}(\lambda_{\wp}))}\right).
\end{align}

Since $\cG_{\wp}$ is a multiplicative group, it follows from Proposition \ref{P3} and Proposition \ref{P4}(iii) that there exists a polynomial $\cQ(x) \in \A[x]$ of degree at most $q^{\deg(\wp)} - 2$ such that  $\cQ(\lambda_{\wp})$ is a unit in $\cO_{\wp}$, and 
\begin{align*}
\dfrac{\cP(\lambda_{\wp})}{\cP(C_{\fg}(\lambda_{\wp}))}\dfrac{\cP_{\beta}(\lambda_{\wp})}{\cP_{\beta}(C_{\fg}(\lambda_{\wp}))} = \dfrac{\cQ(\lambda_{\wp})}{\cQ(C_{\fg}(\lambda_{\wp}))}.
\end{align*}
Hence we deduce from (\ref{E16-MT}) that
\begin{align*}
\epsilon = \left(\dfrac{\lambda_{\wp}}{C_{\fg}(\lambda_{\wp})}\right)^{\ell}\left(\dfrac{\cQ(\lambda_{\wp})}{\cQ(C_{\fg}(\lambda_{\wp}))}\right),
\end{align*}
which proves the first part of the theorem.

We now prove the uniqueness. Assume that there exist another integer $\ell_{\star}$ with $0 \le \ell_{\star} \le q^{\deg(\wp)} - 2$, and another polynomial $\cQ_{\star}(x) \in \A[x]$ of degree at most $q^{\deg(\wp)} - 2$ with $\cQ_{\star}(\lambda_{\wp})$ being a unit in $\cO_{\wp}$ such that
\begin{align}
\label{E17-MT}
\epsilon = \left(\dfrac{\lambda_{\wp}}{C_{\fg}(\lambda_{\wp})}\right)^{\ell}\left(\dfrac{\cQ(\lambda_{\wp})}{\cQ(C_{\fg}(\lambda_{\wp}))}\right) = \left(\dfrac{\lambda_{\wp}}{C_{\fg}(\lambda_{\wp})}\right)^{\ell_{\star}}\left(\dfrac{\cQ_{\star}(\lambda_{\wp})}{\cQ_{\star}(C_{\fg}(\lambda_{\wp}))}\right).
\end{align}
Hence
\begin{align}
\label{E18-MT}
\left(\dfrac{C_{\fg}(\lambda_{\wp})}{\lambda_{\wp}}\right)^{\ell}\left(\dfrac{\cQ(C_{\fg}(\lambda_{\wp}))}{\cQ(\lambda_{\wp})}\right) = \left(\dfrac{C_{\fg}(\lambda_{\wp})}{\lambda_{\wp}}\right)^{\ell_{\star}}\left(\dfrac{\cQ_{\star}(C_{\fg}(\lambda_{\wp}))}{\cQ_{\star}(\lambda_{\wp})}\right).
\end{align}

By Rosen \cite[Proposition 12.11]{Rosen}, $C_{\fg}(x) \in \A[x]$ can be written in the form
\begin{align*}
C_{\fg}(x) = \fg x + [\fg, 1]x^q + \ldots + [\fg, \deg(\fg)] x^{q^{\deg(\fg)}},
\end{align*}
where the $[\fg, i]$ are in $\A$, and $[\fg, \deg(\fg)] \in \bF_q^{\times}$ is the leading coefficient of $\fg$. It follows that 
\begin{align}
\label{E19-MT}
C_{\fg}(0) = 0,
\end{align}
and 
\begin{align}
\label{E20-MT}
\dfrac{C_{\fg}(\lambda_{\wp})}{\lambda_{\wp}} \equiv \fg \pmod{\lambda_{\wp}}.
\end{align}
Furthermore we deduce from Lemma \ref{L3} that $\gcd(\cQ(0), \wp) = 1$ and $\gcd(\cQ_{\star}(0), \wp) = 1$. In particular this implies that $\cQ(0), \cQ_{\star}(0)$ are nonzero elements in $\A$. Hence it follows from (\ref{E18-MT}), (\ref{E19-MT}), and (\ref{E20-MT}) that
\begin{align*}
\fg^{\ell} \equiv \fg^{\ell_{\star}} \pmod{\lambda_{\wp}}.
\end{align*}

Since $\wp\cO_{\wp} = (\lambda_{\wp}\cO_{\wp})^{q^{\deg(\wp)} - 1}$ (see Rosen \cite[Proposition 12.7]{Rosen}), we deduce that
\begin{align*}
\fg^{\ell} \equiv \fg^{\ell_{\star}} \pmod{\wp}.
\end{align*}
Without loss of generality, we assume that $\ell \ge \ell_{\star}$. If $\ell > \ell_{\star}$, then
\begin{align*}
\fg^{\ell - \ell_{\star}} \equiv 1 \pmod{\wp},
\end{align*}
and since $\fg$ is a primitive root modulo $\wp$, the above equation implies that $\# (\A/\wp \A)^{\times} = q^{\deg(\wp)} - 1$ divides $\ell - \ell_{\star}$. This is a contradiction since $0 < \ell - \ell_{\star} < q^{\deg(\wp)} - 1$. Hence 
\begin{align}
\label{E21-MT}
\ell = \ell_{\star}.
\end{align}

By (\ref{E17-MT}), and since $\sigma_{\fg}(\lambda_{\wp}) = C_{\fg}(\lambda_{\wp})$ , we deduce that
\begin{align*}
\dfrac{\cQ(\lambda_{\wp})}{\cQ_{\star}(\lambda_{\wp})} = \dfrac{ \cQ(C_{\fg}(\lambda_{\wp}))}{\cQ_{\star}(C_{\fg}(\lambda_{\wp}))} = \sigma_{\fg}\left(\dfrac{ \cQ(\lambda_{\wp})}{\cQ_{\star}(\lambda_{\wp})}\right).
\end{align*}
Thus the unit $\dfrac{\cQ(\lambda_{\wp})}{\cQ_{\star}(\lambda_{\wp})}$ in $\cO_{\wp}$ is invariant under the action of $\sigma_{\fg}$. Since $\sigma_{\fg}$ is a generator of the cyclic group $\Gal(\bK_{\wp}/\k)$, the unit $\dfrac{\cQ(\lambda_{\wp})}{\cQ_{\star}(\lambda_{\wp})}$ is also invariant under the action of each member of $\Gal(\bK_{\wp}/\k)$, and thus $\dfrac{\cQ(\lambda_{\wp})}{\cQ_{\star}(\lambda_{\wp})}$ is a unit in $\A$. This implies that $\dfrac{\cQ(\lambda_{\wp})}{\cQ_{\star}(\lambda_{\wp})} \in \bF_q^{\times}$, and therefore there exists a unit $\upsilon \in \bF_q^{\times}$ such that
\begin{align}
\label{E22-MT}
\cQ_{\star}(\lambda_{\wp}) = \upsilon \cQ(\lambda_{\wp}).
\end{align}

Set $\kappa = \cQ_{\star}(\lambda_{\wp})$. By the assumption, we know that $\kappa$ is a unit in $\cO_{\wp}$. Since the polynomial $\cQ_{\star}(x)$ is of degree at most $q^{\deg(\wp)} - 2$, it follows from Proposition \ref{P2} that $\cQ_{\star}(x) = \cP_{\kappa}(x)$, where $\cP_{\kappa}(x) \in \A[x]$ is the polynomial representing $\kappa$. 

On the other hand, we know from (\ref{E22-MT}) that $\kappa = \upsilon \cQ(\lambda_{\wp})$. Note that $\upsilon \cQ(x)$ is a polynomial in $\A[x]$ of degree at most  $q^{\deg(\wp)} - 2$ since $\upsilon \in \bF_q^{\times}$. Hence repeating the same arguments as above, one sees that $\upsilon\cQ(x) = \cP_{\kappa}(x)$, and thus
\begin{align}
\label{E23-MT}
\cQ_{\star}(x) = \upsilon \cQ(x).
\end{align}

The second part of the theorem follows immediately from (\ref{E21-MT}) and (\ref{E23-MT}).

\end{proof}

Recall that the multiplicative group $\cG_{\wp}$ consists of all elements $\dfrac{\cQ(\lambda_{\wp})}{\cQ(C_{\fg}(\lambda_{\wp}))}$, where $\cQ(x)$ is a polynomial in $\A[x]$ such that $\cQ(\lambda_{\wp})$ is a unit in $\cO_{\wp}$. The next result follows immediately from Theorem \ref{MT}.

\begin{corollary}
\label{C1-MT}

Let $\epsilon$ be a unit in $\cO_{\wp}$ of norm $1$. Then there exist an integer $\ell$ with $0 \le \ell \le q^{\deg(\wp)} - 2$, and an element $\kappa \in \cG_{\wp}$ such that the following are true:

\begin{itemize}

\item [(i)] $\epsilon$ can be represented in the form
\begin{align*}
\epsilon = \left(\dfrac{\lambda_{\wp}}{C_{\fg}(\lambda_{\wp})}\right)^{\ell}\kappa.
\end{align*}

\item [(ii)] The representation of $\epsilon$ in (i) is unique; more precisely, if there exist another integer $\ell_{\star}$ with $0 \le \ell_{\star} \le q^{\deg(\wp)} - 2$, and another element $\kappa_{\star} \in \cG_{\wp}$ such that
\begin{align*}
\epsilon = \left(\dfrac{\lambda_{\wp}}{C_{\fg}(\lambda_{\wp})}\right)^{\ell_{\star}}\kappa_{\star},
\end{align*}
then $\ell_{\star} = \ell$ and $\kappa_{\star} = \kappa$.

\end{itemize}

\end{corollary}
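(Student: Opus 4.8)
The plan is to deduce both parts directly from Theorem \ref{MT}, the only genuine work being to translate the ``up to a scalar $\upsilon \in \bF_q^{\times}$'' ambiguity of that theorem into the sharp uniqueness asserted here.

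For the existence statement (i), I would take the integer $\ell$ with $0 \le \ell \le q^{\deg(\wp)} - 2$ and the polynomial $\cQ(x) \in \A[x]$ of degree at most $q^{\deg(\wp)} - 2$ produced by Theorem \ref{MT}, so that
\[
\epsilon = \left(\dfrac{\lambda_{\wp}}{C_{\fg}(\lambda_{\wp})}\right)^{\ell}\dfrac{\cQ(\lambda_{\wp})}{\cQ(C_{\fg}(\lambda_{\wp}))},
\]
and then set $\kappa = \dfrac{\cQ(\lambda_{\wp})}{\cQ(C_{\fg}(\lambda_{\wp}))}$. Because $\cQ(\lambda_{\wp})$ is a unit in $\cO_{\wp}$, the element $\kappa$ lies in $\cG_{\wp}$ by the very definition of $\cG_{\wp}$ in Subsection \ref{Subsection-G-wp}, and (i) follows at once.

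For the uniqueness (ii), suppose $\epsilon = \left(\dfrac{\lambda_{\wp}}{C_{\fg}(\lambda_{\wp})}\right)^{\ell_{\star}}\kappa_{\star}$ with $\ell_{\star}$ in the same range and $\kappa_{\star} \in \cG_{\wp}$. The key step is to present every element of $\cG_{\wp}$ by a polynomial of bounded degree so that Theorem \ref{MT} applies: invoking Proposition \ref{P4}(iii), I would write $\kappa = \dfrac{\cP_{\alpha}(\lambda_{\wp})}{\cP_{\alpha}(C_{\fg}(\lambda_{\wp}))}$ and $\kappa_{\star} = \dfrac{\cP_{\alpha_{\star}}(\lambda_{\wp})}{\cP_{\alpha_{\star}}(C_{\fg}(\lambda_{\wp}))}$, where $\cP_{\alpha}, \cP_{\alpha_{\star}} \in \A[x]$ are the unique polynomials of degree at most $q^{\deg(\wp)} - 2$ representing the units $\alpha, \alpha_{\star}$. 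These two presentations meet exactly the hypotheses of the uniqueness clause (R2) of Theorem \ref{MT}, which then forces $\ell_{\star} = \ell$ together with $\cP_{\alpha_{\star}}(x) = \upsilon\,\cP_{\alpha}(x)$ for some $\upsilon \in \bF_q^{\times}$.

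Finally I would observe that this residual scalar is harmless upon forming the quotient: from $\cP_{\alpha_{\star}}(x) = \upsilon\,\cP_{\alpha}(x)$ one gets, evaluating at both $\lambda_{\wp}$ and $C_{\fg}(\lambda_{\wp})$,
\[
\kappa_{\star} = \dfrac{\upsilon\,\cP_{\alpha}(\lambda_{\wp})}{\upsilon\,\cP_{\alpha}(C_{\fg}(\lambda_{\wp}))} = \dfrac{\cP_{\alpha}(\lambda_{\wp})}{\cP_{\alpha}(C_{\fg}(\lambda_{\wp}))} = \kappa,
\]
since the constant $\upsilon$ cancels in numerator and denominator. Together with $\ell_{\star} = \ell$ this yields (ii). I do not expect a real obstacle; the only delicate point is this normalization, namely that Theorem \ref{MT} pins down $\cQ$ only up to $\bF_q^{\times}$, so one must first pass to the bounded-degree representatives of Proposition \ref{P4}(iii) before applying (R2), and then check that the $\bF_q^{\times}$-ambiguity disappears once the quotient defining an element of $\cG_{\wp}$ is formed.
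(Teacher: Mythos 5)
Your proposal is correct and matches the paper's intent exactly: the paper gives no explicit proof, stating only that the corollary ``follows immediately from Theorem \ref{MT},'' and your argument is precisely the intended deduction. The one point of substance you identify --- using Proposition \ref{P4}(iii) to present $\kappa_{\star}$ by a bounded-degree polynomial so that (R2) applies, and then observing that the $\bF_q^{\times}$-ambiguity cancels in the quotient $\cQ(\lambda_{\wp})/\cQ(C_{\fg}(\lambda_{\wp}))$ --- is handled correctly.
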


Corollary \ref{C1-MT} can be interpreted in terms of group theory.

\begin{corollary}
\label{C2-MT}

\begin{itemize}

\item []

\item [(i)] $\cG_{\wp}$ is a subgroup of $\cU_{\star}(\cO_{\wp})$ such that $[\cU_{\star}(\cO_{\wp}) : \cG_{\wp}] = q^{\deg(\wp)} - 1$. (Recall that $\cU_{\star}(\cO_{\wp})$ is the group of units of norm $1$ in $\cO_{\wp}$.)

\item [(ii)] The quotient group $\cU_{\star}(\cO_{\wp}) / \cG_{\wp}$ is cyclic of order $q^{\deg(\wp)} - 1$; furthermore $\cU_{\star}(\cO_{\wp}) / \cG_{\wp}$ is generated by $\dfrac{\lambda_{\wp}}{C_{\fg}(\lambda_{\wp})}\cG_{\wp}$.

\end{itemize}

\end{corollary}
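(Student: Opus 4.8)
The plan is to read off both statements directly from the existence and uniqueness in Corollary \ref{C1-MT}, with essentially no further computation. First I would set up the quotient: by Proposition \ref{P4}(i), $\cG_{\wp}$ is a subgroup of $\cU_{\star}(\cO_{\wp})$, and since $\cU_{\star}(\cO_{\wp})$ is abelian the subgroup is normal, so $G := \cU_{\star}(\cO_{\wp})/\cG_{\wp}$ is a well-defined abelian group. Write $u = \dfrac{\lambda_{\wp}}{C_{\fg}(\lambda_{\wp})}$. As recorded in the proof of Theorem \ref{MT}, $\Nm_{\bK_{\wp}/\k}\left(\dfrac{C_{\fg}(\lambda_{\wp})}{\lambda_{\wp}}\right) = 1$, and $u$ is a unit in $\cO_{\wp}$ by Rosen \cite[Proposition 12.6]{Rosen}; hence $u \in \cU_{\star}(\cO_{\wp})$ and its image $\bar{u} = u\,\cG_{\wp}$ is a legitimate element of $G$.

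The existence half of Corollary \ref{C1-MT} yields cyclicity and shows the powers of $\bar{u}$ exhaust $G$ at once: for any $\epsilon \in \cU_{\star}(\cO_{\wp})$ there is an $\ell$ with $0 \le \ell \le q^{\deg(\wp)} - 2$ and a $\kappa \in \cG_{\wp}$ with $\epsilon = u^{\ell}\kappa$, so $\epsilon\,\cG_{\wp} = \bar{u}^{\ell}$. Thus every element of $G$ lies in the list $\bar{u}^{0}, \bar{u}^{1}, \dots, \bar{u}^{q^{\deg(\wp)} - 2}$, and in particular $G = \langle\bar{u}\rangle$ is cyclic with generator $\bar{u} = \dfrac{\lambda_{\wp}}{C_{\fg}(\lambda_{\wp})}\cG_{\wp}$.

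The uniqueness half shows that these $q^{\deg(\wp)} - 1$ cosets are pairwise distinct, which pins down the order. Indeed, if $\bar{u}^{\ell} = \bar{u}^{\ell_{\star}}$ with $0 \le \ell_{\star} \le \ell \le q^{\deg(\wp)} - 2$, then $u^{\ell} = u^{\ell_{\star}}\kappa$ for $\kappa = u^{\ell - \ell_{\star}} \in \cG_{\wp}$; applying the uniqueness clause of Corollary \ref{C1-MT} to the norm-$1$ unit $\epsilon = u^{\ell}$, whose two admissible representations are $u^{\ell}\cdot 1$ and $u^{\ell_{\star}}\cdot\kappa$, forces $\ell = \ell_{\star}$. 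Hence $|G| = q^{\deg(\wp)} - 1$, so $\bar{u}$ has order exactly $q^{\deg(\wp)} - 1$; this gives (ii), and together with Proposition \ref{P4}(i) it yields $[\cU_{\star}(\cO_{\wp}) : \cG_{\wp}] = |G| = q^{\deg(\wp)} - 1$, which is (i). As a consistency check one may note \eqref{E-for-Lambda_wp-in-terms-of-lambda_wp}, namely $\Lambda_{\wp} = u^{1 - q^{\deg(\wp)}}$ with $\Lambda_{\wp}^{-1} \in \cG_{\wp}$, which independently gives $\bar{u}^{q^{\deg(\wp)} - 1} = 1$. I do not expect a genuine obstacle; the only delicate point is to feed the correct pair of representations into the uniqueness clause, rather than attempting to argue distinctness of the cosets by hand.
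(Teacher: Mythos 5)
Your proof is correct and follows exactly the route the paper intends: the paper gives no explicit argument for Corollary \ref{C2-MT}, merely asserting that it is the group-theoretic interpretation of Corollary \ref{C1-MT}, and your derivation (existence gives generation by $\dfrac{\lambda_{\wp}}{C_{\fg}(\lambda_{\wp})}\cG_{\wp}$, uniqueness applied to the two representations $u^{\ell}\cdot 1$ and $u^{\ell_{\star}}\cdot u^{\ell-\ell_{\star}}$ gives distinctness of the cosets) is precisely that interpretation, carried out carefully.
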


Hilbert's Satz 90 (see Lang \cite{Lang}) tells us that for a unit $\epsilon \in \cU_{\star}(\cO_{\wp})$, there exists an element $\delta \in \cO_{\wp}$ such that $\epsilon = \dfrac{\delta}{\sigma_{\fg}(\delta)}$ (see Proposition \ref{P-Hilbert-Satz-90-for-units}). The next result is a refinement of Hilbert's Satz 90 for the extension $\bK_{\wp}/\k$, which gives a sufficient and necessary condition under which an element in $\cO_{\wp}$ is a quotient of conjugate units. 

\begin{corollary}
\label{C3-MT}

Let $\epsilon \in \cU_{\star}(\cO_{\wp})$. Then $\epsilon$ is the quotient of conjugate units in $\cO_{\wp}$ if and only if $\cP_{\epsilon}(0) \equiv 1 \pmod{\wp}$, where $\cP_{\epsilon}(x) \in \A[x]$ is the polynomial representing $\epsilon$.

\end{corollary}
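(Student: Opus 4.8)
The plan is to reduce the statement to the canonical representation furnished by Corollary \ref{C1-MT} and then track a single invariant, the constant term modulo $\wp$. First I would note that ``$\epsilon$ is a quotient of conjugate units'' means precisely $\epsilon \in \cG_{\wp}$, by Proposition \ref{P4}(ii). Writing $\epsilon = \left(\frac{\lambda_{\wp}}{C_{\fg}(\lambda_{\wp})}\right)^{\ell}\kappa$ with $0\le\ell\le q^{\deg(\wp)}-2$ and $\kappa\in\cG_{\wp}$ as in Corollary \ref{C1-MT}, the uniqueness in that corollary shows $\epsilon\in\cG_{\wp}$ if and only if $\ell=0$. So it remains to prove that $\ell=0$ if and only if $\cP_{\epsilon}(0)\equiv 1\pmod{\wp}$.

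The main device is the \emph{constant-term map} $\phi$, sending a unit $\mu\in\cO_{\wp}$ to $\cP_{\mu}(0)\bmod\wp$; this lands in $(\A/\wp\A)^{\times}$ by Lemma \ref{L3}. I would check that $\phi$ is a group homomorphism: for units $\mu,\nu$ the polynomials $\cP_{\mu}(x)\cP_{\nu}(x)$ and $\cP_{\mu\nu}(x)$ agree at $\lambda_{\wp}$, hence differ by a multiple of the minimal polynomial $\Psi_{\wp}(x)$; evaluating at $x=0$ and using $\Psi_{\wp}(0)=\wp$ (Proposition \ref{P-Psi(0)=wp}) gives $\cP_{\mu}(0)\cP_{\nu}(0)\equiv\cP_{\mu\nu}(0)\pmod{\wp}$. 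The identical reduction-modulo-$\Psi_{\wp}$ argument shows the more flexible fact $\phi(\cR(\lambda_{\wp}))\equiv\cR(0)\pmod{\wp}$ for \emph{any} $\cR(x)\in\A[x]$ with $\cR(\lambda_{\wp})$ a unit, not only the distinguished representative $\cP_{\mu}$. Conceptually, $\phi$ is simply reduction modulo the totally ramified prime $\lambda_{\wp}\cO_{\wp}$, whose residue field is $\A/\wp\A$.

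Next I would evaluate $\phi$ on the two building blocks. For $\kappa\in\cG_{\wp}$, write $\kappa=\delta/\sigma_{\fg}(\delta)$ with $\delta$ a unit (Proposition \ref{P4}(ii)); since $\sigma_{\fg}(\delta)=\cP_{\delta}(C_{\fg}(\lambda_{\wp}))$ and $C_{\fg}(0)=0$, the flexible reduction identity gives $\phi(\sigma_{\fg}(\delta))\equiv\cP_{\delta}(C_{\fg}(0))=\cP_{\delta}(0)\equiv\phi(\delta)\pmod{\wp}$, whence $\phi(\kappa)=1$, i.e. $\phi$ annihilates $\cG_{\wp}$. For the distinguished unit $u=\frac{\lambda_{\wp}}{C_{\fg}(\lambda_{\wp})}$, its inverse is $\cZ(\lambda_{\wp})$ with $\cZ(x)=C_{\fg}(x)/x$, and $\cZ(0)=\fg$ because $C_{\fg}(x)=\fg x+[\fg,1]x^{q}+\cdots$; thus $\phi(u)=\phi(\cZ(\lambda_{\wp}))^{-1}\equiv\fg^{-1}\pmod{\wp}$. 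Applying the homomorphism $\phi$ to $\epsilon=u^{\ell}\kappa$ then yields $\cP_{\epsilon}(0)\equiv\fg^{-\ell}\pmod{\wp}$.

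Finally, since $\fg$ is a primitive root modulo $\wp$ and $0\le\ell\le q^{\deg(\wp)}-2$, we have $\fg^{-\ell}\equiv 1\pmod{\wp}$ if and only if $\ell=0$. Combining this with the first paragraph, $\epsilon$ is a quotient of conjugate units $\iff \epsilon\in\cG_{\wp}\iff \ell=0 \iff \cP_{\epsilon}(0)\equiv 1\pmod{\wp}$, which is exactly the asserted criterion. The only point requiring genuine care---the ``main obstacle''---is establishing that $\phi$ is a well-defined homomorphism that kills $\cG_{\wp}$; but this rests entirely on the single fact $\Psi_{\wp}(0)=\wp$ together with $C_{\fg}(0)=0$, so beyond careful bookkeeping with reductions modulo $\Psi_{\wp}(x)$ no new difficulty should arise.
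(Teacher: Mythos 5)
Your argument is correct in substance and follows essentially the same route as the paper: both proofs come down to reducing everything modulo the totally ramified prime $\lambda_{\wp}\cO_{\wp}$ (equivalently, evaluating representing polynomials at $0$ and using $\Psi_{\wp}(0)=\wp$, $C_{\fg}(0)=0$, and $\cZ(0)=\fg$ for $\cZ(x)=C_{\fg}(x)/x$), and both arrive at the key congruence $\fg^{\ell}\cP_{\epsilon}(0)\equiv 1\pmod{\wp}$, which is your identity $\phi(\epsilon)=\fg^{-\ell}$, before invoking the primitivity of $\fg$ to force $\ell=0$. Packaging the reduction as a group homomorphism $\phi$ on units that annihilates $\cG_{\wp}$ and sends $\lambda_{\wp}/C_{\fg}(\lambda_{\wp})$ to $\fg^{-1}$ is a cleaner bookkeeping device than the paper's in-line congruence manipulations, and your use of the uniqueness clause of Corollary \ref{C1-MT} to equate $\epsilon\in\cG_{\wp}$ with $\ell=0$ is a legitimate shortcut that the paper instead re-derives from the congruence.

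The one genuine gap is your opening sentence: the statement that ``$\epsilon$ is a quotient of conjugate units'' means precisely $\epsilon\in\cG_{\wp}$ does not follow from Proposition \ref{P4}(ii) alone. That proposition only describes quotients by the particular generator $\sigma_{\fg}$, whereas a general quotient of conjugate units has the form $\delta/\sigma_{\fg}^{e}(\delta)$ for some unit $\delta$ and some $0\le e\le q^{\deg(\wp)}-2$, and it is not immediate that such an element lies in $\cG_{\wp}$. The paper closes exactly this point with the telescoping product $\alpha=\prod_{i=0}^{e-1}\sigma_{\fg}^{i}(\delta)$, a unit satisfying $\delta/\sigma_{\fg}^{e}(\delta)=\alpha/\sigma_{\fg}(\alpha)$ (together with the trivial case $e=0$, where $\epsilon=1$ and $\cP_{\epsilon}(0)=1$). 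With that one line added, your proof is complete.
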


\begin{proof}

Assume that $\epsilon$ is the quotient of conjugate units in $\cO_{\wp}$, i.e., $\epsilon = \dfrac{\delta}{\sigma_{\fg^e}(\delta)}$ for some unit $\delta \in \cO_{\wp}$, and some integer $e$ with $0 \le e \le q^{\deg(\wp)} - 2$. (Recall that $\sigma_{\fg}$ is a generator of the Galois group $\Gal(\bK_{\wp}/\k)$.) If $e = 0$, then $\epsilon = 1$, and it is easy to see that the polynomial $\cP_{\epsilon}(x) = \cP_1(x) = 1$. Hence $\cP_1(0) \equiv 1 \pmod{\wp}$. 

We now consider the case where $e \ge 1$. Set
\begin{align*}
\alpha = \prod_{i = 0}^{e - 1}\sigma_{\fg^i}(\delta) = \delta \cdot \sigma_{\fg}(\delta)\cdots\sigma_{\fg^{e - 1}}(\delta).
\end{align*}
One can immediately verify that $\alpha$ is a unit in $\cO_{\wp}$, and
\begin{align}
\label{E1-C3}
\epsilon = \dfrac{\delta}{\sigma_{\fg^e}(\delta)} = \dfrac{\alpha}{\sigma_{\fg}(\alpha)}.
\end{align}

By Proposition \ref{P4}, $\dfrac{\alpha}{\sigma_{\fg}(\alpha)}$ belongs to the group $\cG_{\wp}$. Since $\epsilon = \cP_{\epsilon}(\lambda_{\wp})$, we deduce from (\ref{E1-C3}) that
\begin{align}
\label{E2-C3}
 \cP_{\epsilon}(\lambda_{\wp}) = \dfrac{\alpha}{\sigma_{\fg}(\alpha)} = \dfrac{\cP_{\alpha}(\lambda_{\wp})}{\cP_{\alpha}(C_{\fg}(\lambda_{\wp}))},
\end{align}
where $\cP_{\alpha}(x)$ is the polynomial representing $\alpha$. Following the same arguments as in the proof of Theorem \ref{MT} (see equation (\ref{E19-MT})), we deduce that $C_{\fg}(0) = 0$. Using Lemma \ref{L3}, we know that $\gcd(\cP_{\alpha}(0), \wp) = 1$;  hence $\gcd(\cP_{\alpha}(0), \lambda_{\wp}) = 1$ since $\wp\cO_{\wp} = (\lambda_{\wp}\cO_{\wp})^{q^{\deg(\wp)} - 1}$ and $\lambda_{\wp}\cO_{\wp}$ is a prime ideal in $\cO_{\wp}$ (see Rosen \cite[Proposition 12.7]{Rosen}). In particular, this implies that $\cP_{\alpha}(0) \not\equiv 0 \pmod{\lambda_{\wp}}$. By (\ref{E2-C3}), we see that
\begin{align*}
\cP_{\epsilon}(0) \equiv \cP_{\epsilon}(\lambda_{\wp}) = \dfrac{\cP_{\alpha}(\lambda_{\wp})}{\cP_{\alpha}(C_{\fg}(\lambda_{\wp}))} \equiv  \dfrac{\cP_{\alpha}(0)}{\cP_{\alpha}(C_{\fg}(0))} = \dfrac{\cP_{\alpha}(0)}{\cP_{\alpha}(0)} = 1 \pmod{\lambda_{\wp}},
\end{align*}
and thus 
\begin{align*}
\cP_{\epsilon}(0) \equiv 1 \pmod{\wp}.
\end{align*}

Conversely assume that $\epsilon \in \cU_{\star}(\cO_{\wp})$ such that 
\begin{align}
\label{E2-1/2-C3}
\cP_{\epsilon}(0) \equiv 1 \pmod{\wp}. 
\end{align}

By Theorem \ref{MT}, one can write $\epsilon$ in the form
\begin{align}
\label{E3-C3}
\epsilon = \cP_{\epsilon}(\lambda_{\wp}) = \left(\dfrac{\lambda_{\wp}}{C_{\fg}(\lambda_{\wp})}\right)^{\ell}\dfrac{\cQ(\lambda_{\wp})}{\cQ(C_{\fg}(\lambda_{\wp}))},
\end{align}
where $\ell$ is an integer such that $0 \le \ell \le q^{\deg(\wp)} - 2$, and $\cQ(x)$ is a polynomial in $\A[x]$ of degree at most $q^{\deg(\wp)} - 2$ such that $\cQ(\lambda_{\wp})$ is a unit in $\cO_{\wp}$. One can write (\ref{E3-C3}) in the form
\begin{align}
\label{E4-C3}
\left(\dfrac{ C_{\fg}(\lambda_{\wp})}{\lambda_{\wp}}\right)^{\ell}\cP_{\epsilon}(\lambda_{\wp}) = \dfrac{\cQ(\lambda_{\wp})}{\cQ(C_{\fg}(\lambda_{\wp}))}.
\end{align}

Following the same arguments as in the proof of Theorem \ref{MT} (see equation (\ref{E20-MT})), we know that
\begin{align*}
\dfrac{C_{\fg}(\lambda_{\wp})}{\lambda_{\wp}} \equiv \fg \pmod{\lambda_{\wp}},
\end{align*}
and thus
\begin{align}
\label{E5-C3}
\left(\dfrac{ C_{\fg}(\lambda_{\wp})}{\lambda_{\wp}}\right)^{\ell}\equiv \fg^{\ell} \pmod{\lambda_{\wp}}.
\end{align}
Repeating the same arguments as in the first part of this proof, one can show that 
\begin{align*}
\dfrac{\cQ(\lambda_{\wp})}{\cQ(C_{\fg}(\lambda_{\wp}))} \equiv \dfrac{\cQ(0)}{\cQ(C_{\fg}(0))} = \dfrac{\cQ(0)}{\cQ(0)} = 1 \pmod{\lambda_{\wp}},
\end{align*}
and it thus follows from (\ref{E4-C3}) and (\ref{E5-C3}) that
\begin{align}
\label{E6-C3}
\fg^{\ell}\cP_{\epsilon}(0) \equiv 1 \pmod{\lambda_{\wp}}.
\end{align}

Since $\fg^{\ell}\cP_{\epsilon}(0) \in \A$, and $\wp\cO_{\wp} = (\lambda_{\wp}\cO_{\wp})^{q^{\deg(\wp)} - 1}$ (see Rosen \cite[Proposition 12.7]{Rosen}), we deduce from (\ref{E6-C3}) that
\begin{align*}
\fg^{\ell}\cP_{\epsilon}(0) \equiv 1 \pmod{\wp},
\end{align*}
and it thus follows from (\ref{E2-1/2-C3}) that
\begin{align*}
\fg^{\ell} \equiv 1 \pmod{\wp}.
\end{align*}
Since $\fg$ is a generator of the cyclic group $(\A/\wp\A)^{\times}$, and the order of $(\A/\wp\A)^{\times}$ is $q^{\deg(\wp)} - 1$, the above equation implies that $\ell = 0$. Therefore we deduce from (\ref{E3-C3}) that
\begin{align*}
\epsilon = \dfrac{\cQ(\lambda_{\wp})}{\cQ(C_{\fg}(\lambda_{\wp}))} =  \dfrac{\cQ(\lambda_{\wp})}{\cQ(\sigma_{\fg}(\lambda_{\wp}))} =  \dfrac{\cQ(\lambda_{\wp})}{\sigma_{\fg}(\cQ(\lambda_{\wp}))} =  \dfrac{\kappa}{\sigma_{\fg}(\kappa)},
\end{align*}
where $\kappa = \cQ(\lambda_{\wp})$ is a unit in $\cO_{\wp}$. Thus our contention follows.

\end{proof}

\begin{remark}

Corollary \ref{C3-MT} is a function field analogue of Newman \cite[Corollary, p.357]{Newman}.

\end{remark}

\end{document}